\documentclass[12pt]{amsart}
\usepackage{a4wide,enumerate,xcolor}
\usepackage{amsmath,graphicx,comment}
\allowdisplaybreaks

\usepackage{enumitem}
\setlist[itemize]{label={$\bullet$}, leftmargin=12pt, itemsep=3pt}

\let\pa\partial
\let\na\nabla
\let\eps\varepsilon
\newcommand{\N}{{\mathbb N}}
\newcommand{\R}{{\mathbb R}}
\newcommand{\dom}{\Omega}
\newcommand{\diver}{\operatorname{div}}
\newcommand{\D}{{\mathbb D}} 
\newcommand{\K}{{\mathbb K}}
\renewcommand{\S}{\mathbb{S}}
\newcommand{\T}{{\mathbb T}}
\newcommand{\tr}{\text{tr}}
\DeclareMathOperator{\supp}{supp}

\newtheorem{theorem}{Theorem}
\newtheorem{lemma}[theorem]{Lemma}

\newtheorem{remark}[theorem]{Remark}

\newtheorem{definition}{Definition}

\begin{document}

\title[Compressible Navier--Stokes--Korteweg equations]{Global existence analysis for a class of \\ compressible Navier--Stokes--Korteweg equations}

\author[A. J\"ungel]{Ansgar J\"ungel}
\address{Institute of Analysis and Scientific Computing, TU Wien, Wiedner Hauptstra\ss e 8--10, 1040 Wien, Austria}
\email{juengel@tuwien.ac.at} 

\author[F. Philipp]{Flora Philipp}
\address{Institute of Analysis and Scientific Computing, TU Wien, Wiedner Hauptstra\ss e 8--10, 1040 Wien, Austria}
\email{flora.philipp@tuwien.ac.at} 

\date{\today}

\thanks{The authors acknowledge partial support from the Austrian Science Fund (FWF), grant 10.55776/PAT2687825, and from the Austrian Federal Ministry for Women, Science and Research and implemented by \"OAD, grant MULT09/2025. This work has received funding from the European Research Council (ERC) under the European Union's Horizon 2020 research and innovation programme, ERC Advanced Grant NEUROMORPH, no.~101018153. For open-access purposes, the authors have applied a CC BY public copyright license to any author-accepted manuscript version arising from this submission.} 

\begin{abstract}
    The existence of global weak solutions to a broad class of Navier--Stokes--Korteweg equations is established for large data in the three-dimensional torus, including the diffuse-interface and quantum Navier--Stokes systems as special cases. The model consists of the compressible Navier--Stokes equations with degenerate density-dependent viscosity and a general nonlinear third-order Korteweg term. The existence proof combines a priori estimates provided by the energy and Bresch--Desjardins (BD) entropy inequalities with a carefully designed approximation scheme. A crucial ingredient of the analysis is a new dissipation inequality associated with the Korteweg term, obtained via the systematic integration-by-parts method. To construct the solutions, artificial drag terms and a quantum Korteweg regularization are introduced, which are subsequently removed in the limit by establishing a renormalized formulation.
\end{abstract}

\keywords{Compressible Navier--Stokes--Korteweg equations, existence of weak solutions, free energy, BD entropy, systematic integration by parts.}  
 
\subjclass[2000]{35Q35, 35D30, 35Q92, 76N06, 76N10.}

\maketitle

\section{Introduction}

Compressible Navier--Stokes--Korteweg systems provide a general framework for the description of viscous compressible fluids with capillarity effects and arise in a variety of physical contexts, including diffuse-interface \cite{AFW98} and quantum fluid models \cite{Jue12}. A central difficulty in the mathematical analysis of these models stems from the simultaneous presence of nonlinear convection, density-dependent viscosity, and higher-order capillarity terms, often in regimes where vacuum may occur and the viscosity degenerates. The purpose of the present work is to develop a unified existence theory for a broad family of compressible Navier--Stokes--Korteweg equations, encompassing diffuse-interface and quantum fluid models as special cases.

The dynamics of the density $\rho$ and velocity $v$ of the fluid are governed by the equations
\begin{align}
  & \pa_t\rho + \diver(\rho v) = 0\quad\mbox{in }\Omega,\ t>0,
  \label{eq: 1.mass} \\
  & \pa_t(\rho v) + \diver(\rho v\otimes v) + \na p(\rho)
  = \nu\diver(\rho\mathbb{D}(v)) + \diver\mathbb{K}(\rho), \label{eq: 1.momentum}
\end{align}
where $\Omega=\T^3$ is the three-dimensional torus, $\mathbb{D}(v)=\frac12(\na v+\na v^T)$ is the symmetric part of the velocity gradient, and $p(\rho)=\rho^\gamma$ is the pressure with the adiabatic exponent $\gamma>1$. The initial conditions read as
\begin{align}
  \sqrt{\rho}(0,\cdot) = \sqrt{\rho^0}, \quad (\sqrt{\rho} v)(0,\cdot)=\sqrt{\rho^0} v^0
  \quad\mbox{in }\Omega. \label{eq: 1.IC}
\end{align}
The Korteweg stress tensor equals
\begin{align*}
  \mathbb{K}(\rho) = \bigg(\diver(\rho k(\rho)\na\rho)
  - \frac12\big(\rho k'(\rho)+k(\rho)\big)|\na\rho|^2\bigg)\mathbb{I}
  - k(\rho)\na\rho\otimes\na\rho,
\end{align*}
where $k(\rho)$ is the capillarity function \cite[formula (1.29)]{DuSe85}. Our main assumption is that $k(\rho)$ is given by
\begin{align*}
  k(\rho) = \kappa\rho^\alpha,
\end{align*}
where $\kappa>0$ is the capillarity constant and $\alpha\in[-1,0]$. Generally, if $k(\rho)=\psi'(\rho)^2$ for some function $\psi(\rho)$, the third-order Korteweg term simplifies to
\begin{align*}
  \diver\mathbb{K}(\rho) = \rho\na\big(\psi'(\rho)\Delta\psi(\rho)\big).
\end{align*}
The case of constant capillarity $\alpha=0$ leads to $\diver\mathbb{K}(\rho) = 2\kappa\rho\na\Delta\rho$, used in van-der-Waals diffuse-interface models \cite{AFW98}, while the case $\alpha=-1$ gives $\diver\mathbb{K}(\rho) = \kappa\rho\na(\Delta\sqrt\rho/\sqrt\rho)$. The expression $\Delta\sqrt\rho/\sqrt\rho$ is called the quantum Bohm potential exerting a nonlocal quantum force on the particles \cite{Boh52}. The existence of global weak solutions to \eqref{eq: 1.mass}--\eqref{eq: 1.IC} for $\alpha=0$ was proved in \cite{AnSp22}, while global weak solutions for $\alpha=-1$ were shown to exist in \cite{VaYu16}, both for large data and including vacuum. The aim of this paper is to prove a global existence result for all $\alpha\in[-1,0]$, thus generalizing existing results to a broad class of Navier--Stokes--Korteweg models.

\subsection{State of the art}
The analysis of \eqref{eq: 1.mass}--\eqref{eq: 1.momentum} relies on uniform estimates derived from the free energy and Bresch--Desjardins (BD) entropy inequalities, where the free energy $E$ and BD entropy $E_{BD}$ are defined, respectively, by
\begin{align*}
  E(\rho,v) &= \int_{\Omega}\bigg(\frac{\rho}{2}|v|^2 + h(\rho)
  + \frac{2\kappa}{(\alpha+2)^2}|\na\rho^{\alpha/2+1}|^2\bigg)dx, \\
  E_{BD}(\rho,v) &= \frac12\int_{\Omega}\rho|v+\nu\na\log\rho|^2 dx,
\end{align*}
where $h(\rho) = \rho^\gamma/(\gamma-1)$. The free energy is the sum of the kinetic, internal, and capillary energies. The BD entropy was first introduced in \cite{BDL03}. It can be interpreted as the kinetic energy associated to the effective velocity $v+\nu\na\log\rho$, involving the osmotic velocity $\nu\na\log\rho$. The BD entropy can be derived more generally for viscous terms of the type 
\begin{align*}
  \diver\big(\mu_1(\rho)\mathbb{D}(v) 
  + \mu_2(\rho)\diver v\,\mathbb{I}\big),
\end{align*}
where $\mu_2(\rho) = \rho\mu_1'(\rho) - \mu_1(\rho)$ has to be satisfied. Then the effective velocity reads as $v+\na\phi(\rho)$, where $\phi'(\rho) = \mu_1'(\rho)/\rho$. It follows for the choice $\mu_1(\rho)=\nu\rho$ as in equation \eqref{eq: 1.momentum} that $\phi(\rho)=\nu\log\rho$. 

When the viscosity and capillarity coefficients satisfy a certain relation, system \eqref{eq: 1.mass}--\eqref{eq: 1.momentum} can be reformulated in terms of the variable $(\rho,v+\na\xi(\rho))$ for some $\xi$ as a Navier--Stokes system; in particular, the Korteweg term vanishes \cite{Jue11}. The a priori estimates provided by the energy and BD entropy inequalities for \eqref{eq: 1.mass}--\eqref{eq: 1.momentum} do not yield sufficient regularity of the velocity field to adequately address the occurrence of vacuum regions. When $\kappa=0$, it is possible to derive additional bounds from the Mellet--Vasseur estimate \cite{MeVa07}, yielding suitable compactness properties for the sequence of approximating solutions. If $\kappa>0$, it is generally not possible to infer a Mellet--Vasseur-type estimate. Assuming that the viscosity and capillarity coefficients are related in such a way that the Korteweg term vanishes upon reformulation, as discussed above, one can derive a Mellet--Vasseur-type estimate for the new variable. In our case, $\mu_1(\rho)=\rho$, this strategy is only possible for the quantum Navier--Stokes model with $\alpha=-1$, as considered in \cite{AnSp17}. 

The issue of vacuum regions was investigated in several works in the cases $\alpha=0$ and $\alpha=-1$. In case $\alpha=-1$ (quantum Navier--Stokes system), the first global existence result was given in \cite{Jue10} using test functions of the type $\rho\varphi$ and assuming that $\kappa<\nu$. The case $\kappa>\nu$ was considered in \cite{Jia11}. The vacuum difficulty was resolved in \cite{GiLa15} by adding the cold pressure term $\na\rho^{-\beta}$ for some $\beta>0$, while \cite{VaYu16} added the drag term $r_0v+r_1\rho v|v|^2$. These augmented systems ensure that the velocity field is well defined even in the vacuum region. The existence of global weak solutions without cold pressure or drag terms was achieved in \cite{LaVa18} for the quantum Navier--Stokes system. The method is based on the construction of weak solutions that are renormalized in the velocity variable, first used in \cite{VaYu16inv} for the case $\kappa=0$. For $\alpha=0$ (diffuse-interface model), the authors of \cite{BDL03} used test functions of the form $\rho\varphi$ in the momentum equation. In \cite{AnSp22}, the authors establish the existence of solutions for the unmodified system by extending the renormalization method through the introduction of an additional density truncation alongside the velocity renormalization.

For general functions $k(\rho)=\rho^\alpha$ and $\mu(\rho)=\rho^\beta$, existence results for the one-dimensional equations were published in \cite{GeFl16} if  $2/3<\beta\le 1$ and $2\beta-3<\alpha\le -1$. This range was later extended in \cite{ABS25} to $2\beta-3\le\alpha<2\beta-1$, $\alpha>-2$, and $\beta>1/2$, in particular giving $-1\le\alpha<1$ when $\beta=1$. The case $\beta=(\alpha+3)/2$ and $\alpha>-1$ was investigated in \cite{BuHa22}, still in one dimension. If $\beta\ge\gamma>1$ and $\gamma\le\alpha+4$, there exist initial data and a certain range for $\alpha$ such that solutions to \eqref{eq: 1.mass}--\eqref{eq: 1.momentum} in $\R$ blow up in finite time \cite{TWL22}.
 
A major difficulty in the analysis of the Navier--Stokes--Korteweg equations is obtaining sufficient regularity from the BD entropy estimate to control the Korteweg term. In particular, using the test function $\log\rho$ in the momentum equation, we need to derive gradient bounds from the integral
\begin{align}\label{1.J}
  J(\rho) = -\int_\Omega\na\rho\cdot\na\bigg(\rho^{\alpha/2}
  \frac{\Delta\rho^{\alpha/2+1}}{\alpha/2+1}\bigg)dx.
\end{align}
In one space dimension, such an inequality was derived in \cite{GeFl16}, using Sobolev and Gagliardo--Nirenberg inequalities. By exploiting the systematic integration-by-parts method introduced in \cite{JuMa06}, we establish suitable gradient estimates in arbitrary space dimensions, generalizing the one-dimensional results of \cite{GeFl16} to the multi-dimensional case.


\subsection{Key ideas}

As already mentioned, a priori estimates are derived from approximate versions of the energy and BD entropy identities
\begin{align*}
  \frac{d}{dt}E(\rho,v) + \int_\Omega \nu\rho|\mathbb{D}(v)|^2 dx &= 0, \\
  \frac{d}{dt}E_{BD}(\rho,v) + \int_\Omega\bigg(
  \frac{\rho}{2}|\na v-\na v^T|^2 + \frac{4\nu}{\gamma}
  |\na\rho^{\gamma/2}|^2\bigg)dx
  &\leq -\kappa J(\rho),
\end{align*}
where $J(\rho)$ is defined in \eqref{1.J}. The key step is the proof of the inequality
\begin{align}\label{1.ineq}
  J(\rho) \ge C_0\int_\Omega\big(|\na\rho^{\alpha/4+1/2}|^4 
  + |\Delta\rho^{\alpha/2+1}|^2\big)dx
\end{align}
for all $-30/19<\alpha<0$, where $C_0>0$ only depends on $\alpha$ and the space dimension, and which is proved by the systematic integration-by-parts method of \cite{JuMa06}. If $\alpha=0$, we obtain trivially $J(\rho)=\int_\Omega(\Delta\rho)^2 dx$. In one space dimension, the inequality holds for all $-2<\alpha<1$, $\alpha\neq0$, by \cite[Theorem~2.2]{GeFl16}, and this range is optimal. We repeat the proof in Remark \ref{rem.1D} using the systematic integration-by-parts method. In several dimensions, we prove inequality \eqref{1.ineq} for all $-2d(d+2)/(2d^2+1)<\alpha<0$; see Theorem \ref{theorem.ineq}.

A crucial step in the existence proof is the construction of a suitable approximation scheme. We follow the strategy introduced in \cite{AnSp22}. As a first approximation, we add drag terms together with a quantum Korteweg regularization, i.e., the Korteweg tensor corresponding to the exponent $\alpha=-1$,
\begin{align*}
    \diver\K_{Q}(\rho) = 2\rho\nabla \left(\frac{\Delta \sqrt{\rho}}{\sqrt{\rho}}\right)
\end{align*}
and consider, for $\sigma>0$, the regularized system
\begin{align}
    &\partial_t \rho + \diver(\rho v) =0,\label{eq: 1a.mass}\\
    &\partial_t (\rho v) + \diver(\rho v\otimes v) + \nabla p(\rho) = \nu \diver(\rho \D v) + \diver\K(\rho)
    \label{eq: 1a.momentum}\\
    &\phantom{\partial_t (\rho v) + \diver(\rho v\otimes v) + \nabla p(\rho) =}-\sigma v -\sigma \rho |v|^2 v +\sigma \diver\K_{Q}(\rho). \nonumber 
\end{align}
The drag terms ensure that the velocity is well defined in vacuum regions. The quantum Korteweg term is the capillarity term naturally associated with the density-dependent viscosity. This structural compatibility yields a viscosity regularity estimate, which is no longer available once the quantum Korteweg regularization is removed.

In order to solve this approximate system, it is well known that it requires a carefully designed regularization strategy. As in \cite[Sec.~2]{VaYu16}, we add higher-order regularizations with parameters $(\delta,\eta,\eps,\mu)$ such that the modified energy ensures that $\rho$ is bounded away from zero, which is essential for deriving the BD entropy identity. The resulting approximate problem is solved by a Faedo--Galerkin scheme of dimension $n\in\mathbb{N}$, and the limit $n\to\infty$ is performed first. The uniform estimates obtained from the energy and BD entropy identities then provide sufficient compactness to pass to the limit $(\delta,\eta,\varepsilon,\mu) \to 0$ as in \cite{VaYu16}. More precisely, we show that
\begin{align}\label{1.BD2}
  \frac{d}{dt}E_{BD}(\rho,v) + P(\rho,v) 
  \le C_1(\sigma) + \bigg(\eps + \mu + \frac{\eps}{\sqrt\mu}\bigg)
  C_2(\sigma,\delta,\eta),
\end{align}
where $P(\rho,v)$ is the entropy production term containing gradient estimates for $\rho$, $C_1(\sigma)>0$ is independent of $(\delta,\eta,\varepsilon,\mu)$, and $C_2(\sigma,\delta,\eta)>0$ is independent of $(\eps,\mu)$. First, we perform the limit $(\eps,\mu)\to 0$ such that $\eps/\sqrt\mu\to 0$. As a consequence, the right-hand side of \eqref{1.BD2} is bounded uniformly in $(\delta,\eta)$, and in the second step, we can pass to the limit $(\delta,\eta)\to 0$.

The final limit $\sigma\to0$ cannot be justified directly at the level of weak solutions. Instead, it is necessary to work with renormalized weak solutions, similarly as in \cite{AnSp22}: The solutions are renormalized with respect to the velocity and truncated with respect to the density. Consequently, we have to prove that the constructed weak solution satisfies the corresponding renormalized formulation. Compared with the approach of \cite{LaVa18}, the density truncation is additionally required to establish the equivalence between weak and renormalized weak solutions.


\subsection{Main result}

We introduce the tensor
\begin{align}
  \sqrt{\nu\rho}\T_\nu &= \nu\na(\rho v) 
  - 2\nu\sqrt{\rho} v\otimes\na\sqrt{\rho}. \label{eq: 1.Tnu}
\end{align}
For positive smooth densities $\rho$, the tensor $\T_\nu$ equals $\sqrt{\nu\rho}\na v$. We denote the symmetric part of $\T_\nu$ by $\S_\nu = \frac12(\T_\nu+\T_\nu^T)$, and we set $\dom_T:=(0,T)\times\dom$. 

\begin{definition}[Weak solution to the Navier--Stokes--Korteweg system]
The tuple $(\sqrt{\rho}$, $\sqrt{\rho}v)$ is called a weak solution to \eqref{eq: 1.mass}--\eqref{eq: 1.momentum} on $[0,T]$ with initial conditions \eqref{eq: 1.IC} if the regularity 
\begin{equation}\label{eq: 1.regularity}
\begin{aligned}
  & \sqrt{\rho} v \in L^{\infty}(0,T; L^2(\dom;\R^3)), \quad 
  &&\rho \in L^{\infty}(0,T; L^\gamma(\dom)), \quad
  &&\nabla \sqrt{\rho} \in L^{\infty}(0,T; L^2(\dom)), \\
  & \nabla \rho^{\alpha/2+1}\in L^{\infty}(0,T; L^2(\dom)), \quad
  && \nabla\rho^{\gamma/2}\in L^2(\dom_T), \quad 
  &&\T_{\nu} \in L^{2}(\dom_T), \\
  &\mbox{and for }\alpha\in[-1,0): 
  && \nabla \rho^{\alpha/4+1/2}\in L^4(\dom_T), \quad 
  &&\Delta \rho^{\alpha/2+1} \in L^2(\dom_T)
\end{aligned}
\end{equation}
holds and if for all $\theta \in C_0^\infty([0,T)\times\dom)$, $\psi \in C_0^\infty([0,T)\times\dom;\R^3)$,
\begin{align}
    &\int_{\dom} \rho^0 \theta(0,\cdot) dx + \int_0^T\int_{\dom} \rho \partial_t \theta dxdt +\int_0^T\int_{\dom} \rho v \cdot \nabla \theta dxdt = 0, \label{eq: 1.wmass}\\
    &\int_{\dom}\rho^0 v^0\cdot \psi(0,\cdot)dx + \int_0^T \int_{\dom}\big(\rho v \cdot \partial_t \psi + (\rho v \otimes v): \nabla \psi - \nabla p(\rho) \cdot \psi\big) dx dt \label{eq: 1.wmomentum}\\
    &= \int_0^T \int_{\dom}\bigg( \sqrt{\nu \rho} \S_{\nu}: \nabla \psi + \kappa \frac{\Delta \rho^{\alpha/2+1}}{(\alpha/2+1)^2}\nabla\rho^{\alpha/2+1} \cdot \psi + \kappa \frac{\Delta\rho^{\alpha/2+1}}{\alpha/2+1}\rho^{\alpha/2+1} \diver\psi\bigg)dxdt, \nonumber
\end{align}
where $\S_{\nu}= \frac{1}{2}(\T_{\nu} +\T_{\nu}^T)$, and $\T_{\nu}$ satisfies \eqref{eq: 1.Tnu} a.e.\ on $\dom_T$.
\end{definition}

We impose the following conditions on the initial data:
\begin{align}\label{eq: 1.regIC}
    \sqrt{\rho^0} \in L^{2\gamma}(\Omega), \quad \sqrt{\rho^0},\,(\rho^0)^{\alpha/2+1} \in H^1(\Omega), \quad \sqrt{\rho^0} v^0 \in L^2(\dom).
\end{align}

\begin{theorem}\label{theorem.ex}
    Let $\alpha\in[-1,0]$, $\gamma>1$, $\nu>0$, $\kappa>0$, $T>0$, and let $(\sqrt{\rho^0}, \sqrt{\rho^0}v^0)$ satisfy \eqref{eq: 1.regIC}. Then there exists a weak solution to \eqref{eq: 1.mass}--\eqref{eq: 1.momentum} on $[0,T]$.
\end{theorem}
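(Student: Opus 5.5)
The plan follows the multi-level approximation strategy outlined in the introduction. The case $\alpha=0$ is \cite{AnSp22} and $\alpha=-1$ is \cite{VaYu16}/\cite{LaVa18}, so the focus is on $\alpha\in(-1,0)$. The argument is uniform in $\alpha$, with the simplification $J(\rho)=\int_\Omega(\Delta\rho)^2dx$ when $\alpha=0$.

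\textbf{Step 1: the functional inequality.} I first establish \eqref{1.ineq},
\[
J(\rho)\ge C_0\int_\Omega\big(|\na\rho^{\alpha/4+1/2}|^4+|\Delta\rho^{\alpha/2+1}|^2\big)dx,
\]
via the systematic integration-by-parts method of \cite{JuMa06} (Theorem~\ref{theorem.ineq}).

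\begin{itemize}
\item Write $u=\rho^{\alpha/2+1}$ and express $J$ as a polynomial in the derivatives of $\log\rho$, using the quantities $|\na\rho|^4/\rho^4$, $\na\rho^T\na^2\rho\na\rho/\rho^3$, $|\na^2\rho|^2/\rho^2$ and $(\Delta\rho)^2/\rho^2$.
\item Add linear combinations of the integration-by-parts identities relating these integrals.
\item Reduce the claim to checking that a quadratic polynomial is positive definite. This holds on $-2d(d+2)/(2d^2+1)<\alpha<0$, which for $d=3$ is $(-30/19,0)\supset[-1,0)$.
\end{itemize}

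\textbf{Step 2: the $\sigma$-regularized system.} Next I solve \eqref{eq: 1a.mass}--\eqref{eq: 1a.momentum} for fixed $\sigma>0$, following \cite[Sec.~2]{VaYu16}.

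\begin{itemize}
\item Add the higher-order regularizations with parameters $(\delta,\eta,\eps,\mu)$ so that $\rho$ stays bounded away from zero.
\item Solve by Faedo--Galerkin and let $n\to\infty$.
\item Derive the energy identity and the BD inequality \eqref{1.BD2}, where the Korteweg contribution $-\kappa J(\rho)$ is controlled by Step 1.
\item Pass to the limit $(\eps,\mu)\to0$ with $\eps/\sqrt\mu\to0$, and then $(\delta,\eta)\to0$.
\end{itemize}

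Compactness comes from the Aubin--Lions lemma applied to $\sqrt\rho$ and to $\rho^{\alpha/2+1}$, the latter bounded via $\Delta\rho^{\alpha/2+1}\in L^2$, together with the bounds $\na\rho^{\alpha/4+1/2}\in L^4$. The drag terms give $v\in L^4$ and $\rho^{1/4}v\in L^4$. The quantum regularization $\sigma\diver\K_Q$ supplies $\sigma\int|\na^2\log\rho|^2\rho$-type bounds, which yield strong convergence of $\sqrt\rho v$ on the positivity set. The Korteweg term in weak form,
\[
\kappa\frac{\Delta u\,\na u\cdot\psi}{(\alpha/2+1)^2}+\kappa\frac{\Delta u\, u\diver\psi}{\alpha/2+1},
\]
passes to the limit as a product of a weakly convergent factor ($\Delta u$ in $L^2$) and strongly convergent factors ($\na u$, $u$ in $L^2$). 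The strong convergence of $\na u$ follows from interpolation between the $H^2$ bound and time compactness.

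\textbf{Step 3 (main obstacle): the limit $\sigma\to0$.} Here the viscosity regularity from $\K_Q$ and the drag control of $v$ in vacuum are lost, so I follow \cite{AnSp22}.

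\begin{itemize}
\item Show that the weak solutions from Step 2 are renormalized solutions: for bounded $\varphi$, the quantity $\rho\varphi(v)$ is tested with a density truncation $\phi_m(\rho)$ that vanishes near vacuum, cf.\ \cite{LaVa18}.
\item Derive the renormalized momentum equation with defect measures that are bounded uniformly in $\sigma$. Only energy and BD bounds are used; $\T_\nu$ is taken in the sense of \eqref{eq: 1.Tnu}.
\item Pass $\sigma\to0$ in the renormalized formulation. The $\sigma$-terms vanish by the bounds $\sigma\int\rho|v|^4\le C$ and $\sigma\int|v|^2\le C$.
\item Remove the truncation and the renormalization: let $\varphi\to$ the identity and $\phi_m\to1$, so that the defects disappear, and recover \eqref{eq: 1.wmomentum}.
\end{itemize}

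The delicate point is the Korteweg term after multiplication by $\phi_m(\rho)\varphi'(v)$. It produces commutators such as $\Delta u\,\na u\,\phi_m'(\rho)$, which must be estimated using the new $L^4$ bound on $\na\rho^{\alpha/4+1/2}$ from \eqref{1.ineq} combined with the $L^\infty_tL^2$ bound on $\na\sqrt\rho$. Controlling these terms near vacuum for $\alpha\in(-1,0)$, where neither the $\alpha=0$ nor the $\alpha=-1$ structure is available, is the step I expect to be hardest. I would first try Hölder's inequality, exploiting that $\phi_m'$ is supported where $\rho\sim1/m$ or $\rho\sim m$, so that powers of $\rho$ can be traded for factors of $m$.
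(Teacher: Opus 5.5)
\textbf{Steps 1 and 2.} These agree with the paper: Theorem~\ref{theorem.ineq} is proved by systematic integration by parts, followed by the $(\delta,\eta,\eps,\mu)$ scheme with the energy and BD inequalities. Your interpolation $\|\na f\|_{L^2}^2=-\int f\Delta f\,dx$ for the strong convergence of $\na\rho^{\alpha/2+1}$ is essentially Lemma~\ref{lem.strConvGradient}. Note, however, that the strong $L^2$ convergence of $f=\rho^{\alpha/2+1}$ should come from that of $\rho$, not from a bound on $\pa_t\rho^{\alpha/2+1}$. The paper avoids the latter because it is lost as $\sigma\to0$.

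\textbf{Step 3: the vacuum cutoff.} The gap is in Step 3, which is exactly where $\alpha\in(-1,0]$ is new. You place the difficulty in the $\phi_m'$-commutators near vacuum, and you give no mechanism that makes the defects vanish. In the paper, the vacuum cutoff $\phi_m$ never reaches $\sigma=0$. It is removed ($m\to\infty$, by dominated convergence) while $\sigma>0$, using $\rho>0$ a.e., which follows from $\sigma\int_\Omega(\log\rho)_-dx\le C$ in \eqref{1.regul}. The renormalized formulation (Definition~\ref{def.renormregul}) then contains only $\varphi(v)$ and a truncation $\zeta(\rho)$ with $\zeta'$ supported away from $0$ and $\infty$. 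Hence no vacuum commutator has to be controlled uniformly in $\sigma$.

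If instead you keep $\phi_m$ through the weak limit $\sigma\to0$, a commutator such as $\sqrt{\rho/\nu}\,\tr(\T_\nu)\phi_m'(\rho)\rho v\cdot\varphi'(\widetilde v_m)$ is only bounded by $C\|\T_\nu\|_{L^2}\|\sqrt\rho v\|_{L^2}\|\varphi'\|_{L^\infty}$. This bound is not small in $m$ uniformly in $\sigma$, since there is no Mellet--Vasseur control of $\sqrt\rho v$ near vacuum. After the weak limit, such defects are known only through their norm bounds, and your H\"older idea does not reach this term.

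\textbf{Step 3: the defect that actually matters.} The genuinely $\alpha$-dependent defect comes from $\diver\varphi'(v)$ in the Korteweg term:
\[
R_1^6=-\kappa\rho^{\alpha/2+1/2}\frac{\Delta\rho^{\alpha/2+1}}{\alpha/2+1}\varphi''(v):\frac{\T_\nu}{\sqrt\nu}\,\zeta(\rho).
\]
Since $\Delta\rho^{\alpha/2+1}$ and $\T_\nu$ are only in $L^2(\Omega_T)$, the weight $\rho^{\alpha/2+1/2}\zeta(\rho)$ must be in $L^\infty$. Near vacuum this is exactly where $\alpha\ge-1$ enters. At large densities it forces the truncation $\zeta$.

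Consequently, the defects do not disappear by letting $\varphi\to$ identity and the truncation $\to1$ separately. Take $\varphi=\varphi^i_\delta$ and $\zeta=\zeta_\lambda$. Then the bound \eqref{eq: 1.boundMeasure} contains two competing terms:
\begin{itemize}
\item $\delta\lambda^{-(\alpha+1)/2}$, from $\|\varphi''\|_{L^\infty}\|(\cdot)^{\alpha/2+1/2}\zeta\|_{L^\infty}$;
\item $\delta^{-1}\lambda^{1+\alpha/2}$, from $\|\varphi\|_{L^\infty}\|(\cdot)^{-\alpha/2}\zeta'\|_{L^\infty}$.
\end{itemize}
For $\alpha>-1$ both iterated limits therefore diverge. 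One must couple them as $\delta=\lambda^\beta$ with $(\alpha+1)/2<\beta<1+\alpha/2$; the paper takes $\beta=3/4+\alpha/2$.

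This coupling in turn requires the defect bound to be explicit in the norms of $\varphi$ and $\zeta$, with constants depending only on $\sigma$-uniform quantities. Only then does it survive the weak-$*$ limit $\sigma\to0$, where nothing but the bound is available for the measures. Saying the defects are ``bounded uniformly in $\sigma$'' is not enough to remove them at $\sigma=0$.
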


The range of $\alpha$ cannot be easily extended to $\alpha<-1$; see Remark \ref{rem.alpha}. Furthermore, the generalization to viscosity coefficients $\mu_1(\rho)=\rho^\beta$ with $\beta>0$ is not straightforward; see Remark \ref{rem.visc}.


\subsection{Organization of the paper}

The paper is organized as follows. In Section~\ref{sec.prep}, we prove inequality \eqref{1.ineq} and give the definition of weak and renormalized solutions to the regularized Navier--Stokes--Korteweg system \eqref{eq: 1a.mass}--\eqref{eq: 1a.momentum}. Section~\ref{sec:wksol} is devoted to the existence of weak solutions to the regularized system with drag and quantum Korteweg terms. In Section~\ref{sec.wkTOr}, we show that weak solutions to the regularized system \eqref{eq: 1a.mass}--\eqref{eq: 1a.momentum} are renormalized weak solutions. Conversely, in Section~\ref{sec.rTOwk}, we prove that renormalized weak solutions are weak solutions independently of the presence of the regularization terms. The limit in the regularization parameter $\sigma\to 0$ is performed in Section~\ref{sec.sigma}. Finally, we discuss possible generalizations and limitations in Section \ref{sec.rem}.


\section{Preparations}\label{sec.prep}

Before proving our main result, Theorem \ref{theorem.ex}, we first introduce the notions of weak and renormalized weak solutions to the regularized Navier--Stokes--Korteweg system and establish inequality \eqref{1.ineq}.

\subsection{Definitions}

The proof of Theorem \ref{theorem.ex} relies on an approximate Navier--Stokes--Korte\-weg system and associated weak and renormalized solutions, which are defined in this section. We recall the definition of the quantum Korteweg tensor, written as
\begin{align}
    \K_{Q}(\rho) = \sqrt{\rho}\big(\na^2\sqrt{\rho} - 4\na\sqrt[4]{\rho}
  \otimes\na\sqrt[4]{\rho}\big). \label{eq: 1.SQT} 
\end{align}

\begin{definition}[Weak solution to the regularized Navier--Stokes--Korteweg system]\label{def.weakregul}
The tuple $(\sqrt{\rho}, \sqrt{\rho}v)$ is called a weak solution to \eqref{eq: 1a.mass}--\eqref{eq: 1a.momentum} on $[0,T]$ with initial conditions \eqref{eq: 1.IC} if the regularity \eqref{eq: 1.regularity} as well as
\begin{equation}\label{1.regul}
\begin{aligned}
  \sqrt[4]{\sigma} \nabla \sqrt[4]{\rho}, \, \sqrt[4]{\sigma}\sqrt[4]{\rho}v \in L^4(\dom_T), \ \sqrt{\sigma} \Delta \sqrt{\rho}, \, \sqrt{\sigma} v \in L^2(\dom_T),\
  \sigma\int_{\dom} (\log\rho(t,x))_{-} dx\leq C
  \end{aligned}
\end{equation}
for all $0<t<T$ hold, where $z_-=\max\{0,z\}$, and if for all $\theta \in C_0^\infty([0,T)\times\dom)$, $\psi \in C_0^\infty([0,T)\times\dom;\R^3)$,
\begin{align*}
    &\int_{\dom} \rho^0 \theta(0,\cdot) dx + \int_0^T\int_{\dom} \rho \partial_t \theta dxdt +\int_0^T\int_{\dom} \rho v \cdot \nabla \theta dxdt = 0, \\ 
    &\int_{\dom}\rho^0 v^0\cdot \psi(0,\cdot)dx + \int_0^T \int_{\dom}\big(\rho v \cdot \partial_t \psi + (\rho v \otimes v): \nabla \psi - \nabla p(\rho) \cdot \psi\big) dx dt 
    \\ 
    &= \int_0^T \int_{\dom}\bigg( \sqrt{\nu \rho} \S_{\nu}: \nabla \psi + \kappa \frac{\Delta \rho^{\alpha/2+1}}{(\alpha/2+1)^2}\nabla\rho^{\alpha/2+1} \cdot \psi + \kappa \frac{\Delta\rho^{\alpha/2+1}}{\alpha/2+1}\rho^{\alpha/2+1} \diver\psi\bigg)dxdt\nonumber\\
    &\phantom{xx}
    +\sigma \int_0^T \int_{\dom}\big( v\cdot \psi + \rho v |v|^2 \cdot \psi + \K_{Q}: \nabla \psi\big) dxdt, \nonumber
\end{align*}
where $\K_{Q}$ is defined in \eqref{eq: 1.SQT}, $\S_{\nu}= \frac{1}{2}(\T_{\nu} +\T_{\nu}^T)$, and $\T_{\nu}$ satisfies \eqref{eq: 1.Tnu} a.e.\ on $\dom_T$.
\end{definition}

\begin{definition}[Renormalized solution to the regularized  Navier--Stokes--Korteweg system]\label{def.renormregul}
    The tuple $(\sqrt{\rho}, \sqrt{\rho}v)$ is called a renormalized weak solution to \eqref{eq: 1a.mass}--\eqref{eq: 1a.momentum} on $[0,T]$ with initial conditions \eqref{eq: 1.IC} if the regularity \eqref{eq: 1.regularity} and \eqref{1.regul} is satisfied and the following properties hold:
    For every $\varphi\in W^{3,\infty} (\R^3)$ such that there exists $C_1>0$ satisfying
\begin{align}\label{eq: 1.varphi}
  |y_j\varphi(y)| + \bigg|y_j\frac{\pa\varphi}{\pa y_k}(y)\bigg|
  \le C_1\quad\mbox{for all }y\in\R^3,\ j,k=1,2,3,
\end{align}
and for every $\zeta\in W^{2,\infty}(\R)$ with compact support and $    \supp(\zeta')\subset [-C_2, -1/C_2]\cup [1/C_2, C_2]$ for some $C_2>0$,
there exist measures $R_1, R_2, Q_{ijk}\in \mathcal{M}(\Omega_T)$ such that
\begin{align}
    \|R_1&\|_{\mathcal{M}(\Omega_T)} + \sigma\|R_2\|_{\mathcal{M}(\Omega_T)} + \sum_{i,j,k=1}^3\|Q_{ijk}\|_{\mathcal{M} (\Omega_T)} \nonumber \\
    &\leq C\big(\|(\cdot)^{-\alpha/2}\zeta'(\cdot)\|_{L^{\infty}} \|\varphi\|_{L^{\infty}}  + \|\varphi\|_{L^{\infty}}\|\zeta'\|_{L^{\infty}} + \|\varphi'\|_{L^\infty} \|\zeta'\|_{L^\infty} + \|\zeta\|_{L^{\infty}}\|\varphi''\|_{L^{\infty}}
    \label{eq: 1.boundMeasure} \\
    &\phantom{xx}+ \|\varphi''\|_{L^{\infty}}\|(\cdot)^{\alpha/2+1/2} \zeta(\cdot)\|_{L^{\infty}}\big) + C \sqrt{\sigma} \big( \|\sqrt{\cdot} \zeta'(\cdot)\|_{L^{\infty}} 
    + \|\varphi'\|_{L^{\infty}}+\|\zeta\|_{L^{\infty}} \|\varphi''\|_{L^{\infty}}\big) \nonumber \\
  &\phantom{xx}+ C\|\varphi''\|_{L^{\infty}}\nonumber
\end{align}
for some constant $C>0$, only depending on the norms associated to the regularity \eqref{eq: 1.regularity} and \eqref{1.regul}, such that the following equations are satisfied for all $\theta\in C^\infty_0([0,T)\times \dom)$:
\begin{align}
    &\int_{\dom} \rho^0 \theta(0,\cdot) dx + \int_0^T\int_{\dom} \rho \partial_t \theta dxdt +\int_0^T\int_{\dom} \rho v \cdot \nabla \theta dxdt = 0, \label{eq: 1.rmass}\\
    &\int_{\dom}\rho^0 v^0\cdot \varphi'(v^0)\zeta(\rho^0)\theta(0,\cdot)dx  + \int_0^T \int_{\dom}\rho\varphi(v)(\partial_t \theta + v \cdot \nabla \theta)\zeta(\rho)  dx dt \label{eq: 1.rmomentum}\\
    &-\int_0^T \int_{\dom} \sqrt{\nu \rho} \S_{\nu} : (\varphi'(v)\otimes \nabla \theta)\zeta(\rho) dxdt 
    - \int_0^T\int_{\dom} \nabla p(\rho)\cdot \varphi'(v) \theta \zeta(\rho) dxdt\nonumber\\ 
    &-\int_0^T \int_{\dom} \kappa
    \frac{\Delta \rho^{\alpha/2+1}}{\alpha/2+1}\varphi'(v)
    \cdot\bigg(\rho^{\alpha/2+1}\nabla \theta 
    + \frac{\nabla \rho^{\alpha/2+1}}{\alpha/2+1}\theta
    \bigg)\zeta(\rho)dxdt \nonumber\\
    &-\sigma\int_0^T \int_{\dom}(v + \rho |v|^2v)
    \cdot\varphi'(v)\zeta(\rho)\theta  dxdt
    -\sigma\int_0^T\int_{\dom} \K_{Q} :(\varphi'(v)\otimes \nabla \theta)\zeta(\rho) dxdt\nonumber\\
    &+\int_0^T \int_{\dom} (R_1 + \sigma R_2)\theta 
    dxdt = 0,\nonumber
\end{align}
$\K_{Q}$ is defined in \eqref{eq: 1.SQT}, $\S_{\nu}= \frac{1}{2}(\T_\nu+\T_\nu^T)$, and $\T_\nu$ satisfies for all $\theta\in C_0^\infty([0,T)\times\dom)$:
\begin{align}
    \int_0^T\int_\Omega\sqrt{\nu\rho}\frac{\pa\varphi}{\pa v_i}(v)
  (\mathbb{T}_\nu)_{jk}\theta dxdt
  &= -\nu\int_0^T\int_\Omega\bigg(\rho\frac{\pa\theta}{\pa x_j}
  + 2\sqrt{\rho}\frac{\pa\sqrt\rho}{\pa x_j}\theta\bigg)
  \frac{\pa\varphi}{\pa v_i}(v)v_k dxdt \label{eq: 1.rTnu}\\
  &\phantom{xx}+ \int_0^T\int_{\dom} Q_{ijk} \theta dxdt
  \quad\mbox{for }i,j,k=1,2,3. \nonumber 
\end{align}
\end{definition}


\subsection{An auxiliary inequality}\label{sec.ineq}

The Korteweg term is estimated using the following inequality.

\begin{theorem}\label{theorem.ineq}
Let $d\in\N$, $d>1$, and 
\begin{align*}
  -\frac{2d(d+2)}{2d^2+1} <\alpha < 0.
\end{align*}
Then there exists $C_0=C_0(\alpha,d)>0$ such that for all positive smooth functions $\rho$,
\begin{align}\label{2.in}
  -\int_{\T^d} \na\rho\cdot\na
  \bigg(\rho^{\alpha/2}\frac{\Delta\rho^{\alpha/2+1}}{
  \alpha/2+1}\bigg)dx \ge C_0\int_{\T^d}
  \big(|\na\rho^{\alpha/4+1/2}|^4 + |\Delta\rho^{\alpha/2+1}|^2\big)dx.
\end{align} 
\end{theorem}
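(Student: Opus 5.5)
The plan is to apply the systematic integration-by-parts method of \cite{JuMa06}: write the left-hand side of \eqref{2.in} as a polynomial in a few scalar variables, add suitable multiples of integrals that vanish identically, and reduce the claim to an algebraic positivity statement. Set $\beta=\alpha/2+1\in(0,1)$. Integrating by parts once and using
\begin{align*}
  \Delta\rho^\beta=\beta\rho^{\beta-1}\big(\Delta\rho+(\beta-1)|\na\rho|^2/\rho\big),
\end{align*}
together with $\alpha/2+\beta-1=\alpha$, the left-hand side becomes
\begin{align*}
  J(\rho)=\int_{\T^d}\rho^{\alpha+2}\Big(\xi_L^2+\frac{\alpha}{2}\xi_G^2\xi_L\Big)dx .
\end{align*}
Here $\xi_G=|\na\rho|/\rho$, $\xi_L=\Delta\rho/\rho$, and (where $\na\rho\ne0$) $\xi_H=\na^2\rho:(\na\rho\otimes\na\rho)/(\rho|\na\rho|^2)$, $\xi_S=|\na^2\rho|/\rho$. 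The right-hand side of \eqref{2.in} is, up to constants depending on $\alpha$, $\int\rho^{\alpha+2}\big(\xi_G^4+(\xi_L+(\beta-1)\xi_G^2)^2\big)dx$, since $|\na\rho^{\alpha/4+1/2}|^4=c\,\rho^{\alpha+2}\xi_G^4$ and $|\Delta\rho^\beta|^2=\beta^2\rho^{\alpha+2}(\xi_L+(\beta-1)\xi_G^2)^2$. It therefore suffices to find $C_0>0$ with
\begin{align*}
  \int\rho^{\alpha+2}\big(\xi_L^2+\tfrac{\alpha}{2}\xi_G^2\xi_L\big)dx
  \ge C_0\int\rho^{\alpha+2}\big(\xi_G^4+\xi_L^2\big)dx .
\end{align*}

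Next I would use the two integration-by-parts identities that are natural at this homogeneity:
\begin{align*}
  \int\diver\big(\rho^{\alpha-1}|\na\rho|^2\na\rho\big)dx=0,\qquad
  \int\diver\big(\rho^{\alpha}(\na^2\rho\,\na\rho-\Delta\rho\,\na\rho)\big)dx=0 .
\end{align*}
In the variables above their integrands are $\rho^{\alpha+2}\big((\alpha-1)\xi_G^4+2\xi_G^2\xi_H+\xi_G^2\xi_L\big)$ and $\rho^{\alpha+2}\big(\alpha\xi_G^2\xi_H-\alpha\xi_G^2\xi_L+\xi_S^2-\xi_L^2\big)$. Adding $c_1$ times the first and $c_2$ times the second gives an integrand $\rho^{\alpha+2}P(\xi)$ with $P$ depending on $c_1,c_2$.

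It then suffices to show that for some $c_1,c_2$ and small $C_0>0$,
\begin{align*}
  P(\xi)-C_0(\xi_G^4+\xi_L^2)\ge0
\end{align*}
for all admissible $\xi$. The constraints on $\xi$ come from linear algebra on the Hessian in $d$ dimensions. Decomposing $\na^2\rho$ along $\na\rho/|\na\rho|$ and its orthogonal complement gives
\begin{align*}
  \xi_S^2\ge\xi_H^2+\frac{(\xi_L-\xi_H)^2}{d-1}.
\end{align*}
Choosing $c_2\ge0$, we may replace $\xi_S^2$ by this lower bound, so $P$ becomes a polynomial in $(\xi_G^2,\xi_H,\xi_L)$. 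After normalizing $\xi_G=1$ (which the homogeneity allows), $P$ is a quadratic form in $(1,\xi_H,\xi_L)$.

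The main obstacle is the algebraic step: showing that some $(c_1,c_2)$ makes this quadratic form strictly positive definite exactly when $-2d(d+2)/(2d^2+1)<\alpha<0$. The plan is to first minimize over $\xi_H$, which is unconstrained, to get a quadratic in $\xi_L$ with coefficients depending on $(c_1,c_2,\alpha,d)$. Then I would require a positive leading coefficient and a negative discriminant, and solve the resulting inequalities in $c_1,c_2$. I expect that $c_1$ can be eliminated explicitly as the value that optimizes the discriminant. That leaves a single quadratic condition in $c_2$, whose solvability should reduce to $\alpha(2d^2+1)+2d(d+2)>0$ together with $\alpha<0$. Because the strict inequalities define an open condition, a small multiple $C_0(\xi_G^4+\xi_L^2)$ can then be subtracted while preserving nonnegativity. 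Finally, $\xi_L^2$ controls $(\xi_L+(\beta-1)\xi_G^2)^2$ up to $\xi_G^4$ terms, which yields \eqref{2.in}. Points where $\na\rho=0$ are harmless, since there all terms involving $\xi_H$ carry a factor $\xi_G^2=0$.
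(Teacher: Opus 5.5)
Your reduction is correct, and it is the paper's argument written in $\rho$ rather than in $\lambda=\rho^{\alpha/2+1}$. With $b=\alpha/2+1$ one has $\lambda^{-1}|\na\lambda|^2\na\lambda=b^3\rho^{\alpha-1}|\na\rho|^2\na\rho$ and $(\na^2\lambda-\Delta\lambda\,\mathbb{I})\na\lambda=b^2\rho^{\alpha}(\na^2\rho-\Delta\rho\,\mathbb{I})\na\rho$. So your two identities are positive multiples of the paper's $J_1,J_2$, and your Hessian bound is the same linear-algebra inequality the paper applies to $\na^2\lambda$.

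The gap is that you stop exactly where $\alpha$ enters. The existence of admissible $(c_1,c_2)$ is only ``expected'', and that algebra is the whole content of the theorem. Your forecast of the outcome is also wrong. Optimizing fully over $(c_1,c_2)$, the conditions are solvable for $-2d(d+2)/(2d^2+1)<\alpha<1$. For instance, $d=2$, $\alpha=1/2$, $c_1=-3/20$, $c_2=1$ already gives a positive definite form in $(\xi_G^2,\xi_H,\xi_L)$. So $\alpha<0$ is not forced by the method but by a particular choice of multipliers. This does no harm to the statement, but it shows the computation has not been carried out.

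The paper closes this step with exactly such a choice. It removes every term containing the Laplacian variable, which leaves a single $2\times2$ determinant condition. In your notation, minimizing over $\xi_H$ at $\xi_G=1$ gives $A\xi_L^2+B\xi_L+C$ with
\[
A=1-\tfrac{d-1}{d}c_2,\qquad
B=\tfrac{\alpha}{2}+\tfrac{d+2}{d}c_1-\tfrac{d-1}{d}\alpha c_2,\qquad
C=(\alpha-1)c_1-\frac{(d-1)(2c_1+\alpha c_2)^2}{4dc_2}.
\]
Taking $c_2=d/(d-1)$ and $c_1=\alpha d/(2(d+2))$ (these are the paper's $K_2$ and $K_1$ at $K_0=0$) gives $A=B=0$ and
\[
C=-\frac{\alpha\big(\alpha(2d^2+1)+2d(d+2)\big)}{4(d+2)^2},
\]
which is positive exactly for $-2d(d+2)/(2d^2+1)<\alpha<0$.

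Because $A=0$ there, this does not yet control $\xi_L^2$. Replace $c_2$ by $(1-\epsilon)d/(d-1)$. Then $A=\epsilon$, $B=\alpha\epsilon$, and $C$ stays positive, so $B^2<4AC$ for small $\epsilon>0$. This is the same perturbation as the paper's $K_2=\frac{d}{d-1}(\alpha/2+1-K_0)$. With strict definiteness in hand, your openness argument and the comparison of $\xi_L^2$ with $(\xi_L+\frac{\alpha}{2}\xi_G^2)^2$ complete the proof.
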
 

In three space dimensions, the admissible range is $-30/19<\alpha<0$. If $\alpha=0$, we obtain
\begin{align*}
  -\int_{\T^d} \na\rho\cdot\na
  \bigg(\rho^{\alpha/2}\frac{\Delta\rho^{\alpha/2+1}}{
  \alpha/2+1}\bigg)dx 
  = \int_{\T^d}(\Delta\rho)^2 dx 
  = \int_{\T^d}(\Delta\rho^{\alpha/2+1})^2 dx.
\end{align*}
Therefore, we can reformulate inequality \eqref{2.in} for all  $-2d(d+2)/(2d^2+1) <\alpha \le 0$ as
\begin{align}\label{2.ineq} 
   -\int_{\T^d} \na\rho\cdot\na
  \bigg(\rho^{\alpha/2}\frac{\Delta\rho^{\alpha/2+1}}{
  \alpha/2+1}\bigg)dx \ge C_0\int_{\T^d}
  \big(\mathrm{1}_{\alpha<0}|\na\rho^{\alpha/4+1/2}|^4 
  + |\Delta\rho^{\alpha/2+1}|^2\big)dx,
\end{align}
where $\mathrm{1}_{\alpha<0}$ equals one if $\alpha<0$ and zero if $\alpha=0$. We discuss the one-dimensional case $d=1$ in Remark \ref{rem.1D}.

\begin{proof}[Proof of Theorem \ref{theorem.ineq}]
We apply the systematic integration-by-parts technique introduced in  \cite{JuMa06} and applied in \cite{JuMa08}. For a detailed exposition of this technique, we refer to \cite[Chap.~3]{Jue16}. To simplify the notation, we set $\beta:=\alpha/2$. We integrate by parts on the left-hand side of \eqref{2.ineq}:
\begin{align*}
  L &:= -\int_{\T^d} \na\rho\cdot\na
  \bigg(\rho^{\alpha/2}\frac{\Delta\rho^{\alpha/2+1}}{\alpha/2+1}
  \bigg)dx
  = \int_{\T^d}\rho^\beta\Delta \rho
  \frac{\Delta\rho^{\beta+1}}{\beta+1}dx \\
  &= \int_{\T^d}\bigg(\frac{\Delta\rho^{\beta+1}}{\beta+1}
  - \beta\rho^{\beta-1}|\na\rho|^2\bigg)
  \frac{\Delta\rho^{\beta+1}}{\beta+1}dx \\
  &= \int_{\T^d}\bigg(\frac{(\Delta\rho^{\beta+1})^2}{(\beta+1)^2}
  - \frac{\beta}{(\beta+1)^3}
  \frac{|\na\rho^{\beta+1}|^2}{\rho^{\beta+1}}
  \Delta\rho^{\beta+1}\bigg)dx.
\end{align*}
Setting $\lambda=\rho^{\beta+1}$, we write the left- and right-hand sides of \eqref{2.in}:
\begin{align*}
  L = \int_{\T^d}\bigg(\frac{(\Delta\lambda)^2}{(\beta+1)^2}
  - \frac{\beta}{(\beta+1)^3}
  \frac{|\na\lambda|^2}{\lambda}\Delta\lambda\bigg), \quad
  R = \int_{\T^d}\bigg(\frac{|\na\lambda|^4}{16\lambda^2}
  + (\Delta\lambda)^2\bigg)dx.
\end{align*}
We introduce the scalar variables
\begin{align*}
  \xi_G = \frac{|\na\lambda|}{\lambda}, \quad
  \xi_L = \frac{\Delta\lambda}{\lambda}, \quad
  \xi_H = \frac{|\na^2\lambda|}{\lambda}, \quad
  \xi_{GHG} = \frac{1}{\lambda^3}\na\lambda^T\na^2\lambda\na\lambda,
\end{align*}
which yields
\begin{align*}
  L = \int_{\T^d}\lambda^2\bigg(\frac{\xi_L^2}{(\beta+1)^2} 
  - \frac{\beta}{(\beta+1)^3}\xi_G^2\xi_L\bigg)dx, \quad
  R = \int_{\T^d}\lambda^2\bigg(\frac{1}{16}\xi_G^4 + \xi_L^2\bigg)dx.
\end{align*}
Our computation relies, as in \cite[Sec.~2]{JuMa08}, on two ``dummy'' integrals, which are formal expressions that vanish:
\begin{align*}
  J_1 &= \int_{\T^d}\diver\big(\lambda^{-1}|\na\lambda|^2\na\lambda
  \big)dx = \int_{\T^d}\lambda^2(-\xi_G^4 + 2\xi_{GHG} + \xi_G^2\xi_L)
  dx = 0, \\
  J_2 &= \int_{\T^d}\diver\big((\na^2\lambda 
  - \Delta\lambda\mathbb{I})\na\lambda\big)dx
  = \int_{\T^d}\lambda^2(\xi_H^2-\xi_L^2)dx = 0.
\end{align*}
Inequality \eqref{2.in} is shown if we find constants $C_0>0$ and $C_1$, $C_2\in\R$ such that the integrand of $L - C_0R + C_1J_1 + C_2J_2$ is nonnegative. To this end, setting $K_i = C_i(\beta+1)^3$ for $i=0,1,2$, we write
\begin{align*}
  L - C_0R + C_1J_1 + C_2J_2 &= \int_{\T^d}\frac{\lambda^2}{(\beta+1)^3}
  \bigg\{-\bigg(\frac{K_0}{16}+K_1\bigg)\xi_G^4
  + (\beta+1-K_0-K_2)\xi_L^2 \\
  &\phantom{xx}+ (K_1-\beta)\xi_G^2\xi_L
  + 2K_1\xi_{GHG} + K_2\xi_H^2\bigg\}dx.
\end{align*}
It is convenient to replace the variables $(\xi_H,\xi_{GHG})$ by $(\xi_R,\xi_S)$, defined by
\begin{align*}
  (d-1)\xi_G^2\xi_S = \xi_{GHG} - \frac{1}{d}\xi_G^2\xi_L, \quad
  \xi_H^2 = \frac{1}{d}\xi_L^2 + d(d-1)\xi_S^2 + \xi_R^2.
\end{align*}
The variable $\xi_R^2$ is well defined, since \cite[Lemma 2.1]{JuMa08}
\begin{align*}
  \xi_H^2 \ge \frac{1}{d}\xi_L^2 + \frac{d}{d-1}
  \bigg(\frac{\xi_{GHG}}{\xi_G^2} - \frac{1}{d}\xi_L\bigg)^2
  = \frac{1}{d}\xi_L^2 + d(d-1)\xi_S^2,
\end{align*}
and thus, $\xi_R^2$ measures the positive defect of this inequality. This gives
\begin{align*}
  L -{}& C_0R + C_1J_1 + C_2J_2 \\
  &= \int_{\T^d}\frac{\lambda^2}{(\beta+1)^3}
  \bigg\{-\bigg(\frac{K_0}{16}+K_1\bigg)\xi_G^4
  + \bigg(\beta+1-K_0-K_2\bigg(1-\frac{1}{d}\bigg)\bigg)\xi_L^2 \\
  &\phantom{xx}+ \bigg(\bigg(1+\frac{2}{d}\bigg)K_1-\beta\bigg)
  \xi_G^2\xi_L
  + 2(d-1)K_1\xi_G^2\xi_S + d(d-1)K_2\xi_S^2 + K_2\xi_R^2\bigg\}dx.
\end{align*}
We choose $K_1$ and $K_2$ in such a way that the variables $\xi_L^2$ and $\xi_G^2\xi_L$ are eliminated:
\begin{align*}
  K_1 = \frac{\beta}{1+2/d}, \quad
  K_2 = \frac{d}{d-1}(\beta+1-K_0).
\end{align*}
Then we obtain
\begin{align*}
  L - C_0R + C_1J_1 + C_2J_2 = \int_{\T^d}\frac{\lambda^2}{(\beta+1)^3}
  \big(b_G\xi_G^4 + b_{GS}\xi_G^2\xi_S + b_S\xi_S^2
  + b_R\xi_R^2\big)dx, 
\end{align*}
where
\begin{align*}
  & b_G = -\bigg(\frac{K_0}{16} + \frac{\beta}{1+2/d}\bigg), \quad
  b_{GS} = \frac{2(d-1)\beta}{1+2/d}, \\
  & b_S = d^2(\beta+1-K_0), \quad b_R = \frac{b_S}{d(d-1)}.
\end{align*}
The variable $\xi_R$ only appears once, so its coefficient needs to be nonnegative, which holds if $\beta+1-K_0\ge 0$. The remaining part of the polynomial depending on $(\xi_G,\xi_S)$ can be formulated as
\begin{align*}
  \begin{pmatrix} \xi_S & \xi_G^2\end{pmatrix}
  \begin{pmatrix} b_S & b_{GS}/2 \\ b_{GS}/2 & b_G \end{pmatrix}
  \begin{pmatrix} \xi_S \\ \xi_G^2 \end{pmatrix}.
\end{align*} 
The matrix is positive definite if and only if $b_S>0$ and $b_Gb_S-b_{GS}^2/4 > 0$. The first condition implies that $K_0<\beta+1$, while the second one reads as
\begin{align*}
  F(K_0) := -d^2\bigg(\frac{K_0}{16} + \frac{\beta}{1+2/d}\bigg)
  (\beta+1-K_0) - \beta^2\bigg(\frac{d-1}{1+2/d}\bigg)^2
  > 0.
\end{align*}
If $F(0)>0$, we can find by continuity a positive number $K_0$ such that $F(K_0)>0$. The condition $F(0)>0$ is equivalent to $-d(d+2)/(2d^2+1)<\beta<0$, finishing the proof.
\end{proof}


\section{Construction of weak solutions with $\sigma>0$}\label{sec:wksol}

The aim of this section is the proof of the global existence of a weak solution to the regularized Navier--Stokes--Korteweg system \eqref{eq: 1a.mass}--\eqref{eq: 1a.momentum}. This is done by further approximating the mass and momentum equations.

\subsection{Weak solution of an approximate system}

We prove the existence of a weak solution to the approximate system
\begin{align}
    &\partial_t \rho + \diver(\rho v) = \eps \Delta \rho
    \quad\mbox{in }\dom,\ t>0, \label{eq: 2.mass}\\
    &\partial_t (\rho v) + \diver(\rho v\otimes v) + \nabla p(\rho) = \nu \diver(\rho \D v) + \diver\K(\rho) \nonumber \\
    &\phantom{\partial_t (\rho v)} -\sigma v -\sigma \rho |v|^2 v +\sigma \diver\K_{Q}(\rho)\nonumber\\
    &\phantom{\partial_t (\rho v)} + \eps\diver(v\otimes\na\rho) 
  - \mu\Delta^2 v + \eta\na\rho^{-3} + \delta\rho\na\Delta^5\rho,\nonumber
\end{align}
where $\eps$, $\mu$, $\delta$, $\eta>0$, and we impose smooth initial conditions \eqref{eq: 1.IC} satisfying \eqref{eq: 1.regIC} such that $\rho^0$ is bounded away from zero. The higher-order regularization with parameters $(\delta,\eta,\eps,\mu)$ is analogous to that one introduced in \cite[(2.6)]{VaYu16}. The diffusion term $\eps\Delta\rho$ in the continuity equation ensures better analytical properties for the mass equation, while the correction term $\eps\diver(v\otimes\na\rho)$ is included to preserve the underlying energy structure. The fourth-order viscosity term $\mu\Delta^2 v$ provides additional regularity for the velocity field, which is essential for deriving the BD entropy inequality. Finally, the terms $\eta\na\rho^{-3}$ and $\delta\rho\na\Delta^5\rho$ enforce the strict positivity of the density.

The local existence of a solution is shown by the Faedo--Galerkin method of \cite[Chap.~7]{Fei04} or \cite[Sec.~7.7]{NoSt04}. Applying a fixed-point argument on a finite-dimensional space $X_n\subset L^2(\Omega;\R^3)$ with dimension $n\in\N$, we obtain the existence of a time $T_n>0$ and a solution $(\rho_n,v_n)\in C^1([0,T_n];C^\infty(\Omega))\times C^1([0,T_n];X_n)$ to \eqref{eq: 2.mass} and 
\begin{align}\label{eq: 2.vn}
  \int_\Omega\big(&\rho_n v_n\cdot\psi(T_n,\cdot)
  - \rho^0v^0\cdot\psi(0,\cdot)\big)dx 
  = \int_0^{T_n}\int_\Omega\bigg(\rho_nv_n\cdot\pa_t\psi
  + \rho_n(v_n\otimes v_n):\na\psi \\
  &+ p(\rho_n)\diver\psi 
  - \nu \diver(\rho_n\mathbb{D}(v_n))\cdot\psi
  \bigg)dx dt \nonumber \\
  &+ \int_0^{T_n}\int_\Omega\bigg(\diver\mathbb{K}(\rho_n)
  - \sigma v_n - \sigma \rho_n|v_n|^2v_n + \sigma \diver\K_{Q}(\rho_n)\bigg)\cdot\psi dxdt
  \nonumber \\
  &- \int_0^{T_n}\int_\Omega\big(\eps(v_n\otimes\na\rho_n):\na\psi
  + \mu v_n\cdot\Delta^2\psi + \eta\rho_n^{-3}\diver\psi
  - \delta\rho_n\na\Delta^5\rho_n\cdot\psi\big)dxdt \nonumber 
\end{align}
for all $\psi\in C^1([0,T_n];X_n)$, where $\rho_n$ is bounded from above and below by positive constants depending on the $L^\infty(\Omega)$ norm of $\diver v_n$. The initial conditions are $\rho_n(0,\cdot)=\rho^0$ and $v_n(0,\cdot)=P_nv^0$ in $\Omega$, where $P_n$ is the projection on $X_n$.

We wish to construct a solution on the full interval $[0,T]$. This is achieved by deriving uniform estimates for $v_n$, obtained from the approximate energy
\begin{align*}
  E_{\delta,\eta}(\rho,v) = \int_\Omega\bigg(
  \frac{\rho}{2}|v|^2 + h(\rho) 
  + \frac{2\kappa}{(\alpha+2)^2}|\na\rho^{\alpha/2+1}|^2+ 2\sigma|\nabla\sqrt{\rho}|^2
  + \frac{\eta}{4}\rho^{-3} +\frac{\delta}{2}|\na\Delta^2\rho|^2
  \bigg)dx.
\end{align*}
We use the test function $\psi=v_n$ in the momentum equation \eqref{eq: 2.vn} and the test function $\theta=\frac12|v_n|^2 + h'(\rho_n)  - \kappa\rho_n^{\alpha/2}\Delta\rho_n^{\alpha/2+1}/(\alpha/2+1)-2\sigma \Delta \sqrt{\rho_n}/\sqrt{\rho_n} - \delta\Delta^5\rho_n - \frac34\eta\rho_n^{-4}$ in the mass equation \eqref{eq: 2.mass}. A standard computation yields
\begin{align}\label{eq: 3.ei}
  E_{\delta,\eta}&((\rho_n,v_n)(t))
  + \int_0^t\int_\Omega\bigg(
  \sigma|v_n|^2 + \sigma\rho_n|v_n|^4 + \nu\rho_n|\mathbb{D}(v_n)|^2
  \bigg)dxd\tau \\
  &\phantom{xx}+ \int_0^t\int_\Omega\bigg(
  \frac{4\eps}{\gamma}|\na\rho_n^{\gamma/2}|^2
  + \mu|\Delta v_n|^2 + \frac{4\eps\eta}{3}|\nabla\rho^{-3/2}|^2
  + \eps\delta|\Delta^3\rho_n|^2\bigg)dxd\tau \nonumber \\
  &= E_{\delta,\eta}((\rho_n,v_n)(0))
  + \eps\kappa\int_0^t\int_\Omega \na\rho_n\cdot\na
  \bigg(\rho_n^{\alpha/2}\frac{\Delta\rho_n^{\alpha/2+1}}{
  \alpha/2+1}\bigg)dxd\tau \nonumber \\
  &\phantom{xx}+2\eps \sigma\int_0^t \int_{\dom} \nabla \rho_n \cdot \nabla \bigg(\frac{\Delta \sqrt{\rho_n}}{\sqrt{\rho_n}}\bigg) dxdt \nonumber \nonumber\\
  &\le E_{\delta,\eta}((\rho_n,v_n)(0))
  - C_0\eps\kappa\int_0^t\int_\Omega\big(\mathrm{1}_{\alpha<0}
  |\na\rho_n^{\alpha/4+1/2}|^4
  + |\Delta\rho_n^{\alpha/2+1}|^2\big)dxd\tau \nonumber \\
  &\phantom{xx}-\widetilde{C}_0 \eps \sigma \int_0^t \int_\dom (|\nabla \rho^{1/4}|^4 + |\Delta \sqrt{\rho}|^2)dxdt, \nonumber
\end{align}
where the last step follows from inequality \eqref{2.ineq}. Notice that we lose the gradient bound for $\rho^{\alpha/4+1/2}$ if $\alpha=0$. Inequality \eqref{eq: 3.ei} yields a priori estimates uniform in $n$ and $(\delta,\eta,\eps,\mu)$.

\begin{lemma}
Let $-30/19<\alpha\le 0$. There exists a constant $C>0$, depending on $(\rho^0,v^0)$ but independent of $n$ and $(\delta,\eta,\eps,\mu)$, such that for $(\rho,v)=(\rho_n,v_n)$ and $T = T_n$,
\begin{align}\nonumber 
  &\|\sqrt\rho v\|_{L^\infty(0,T;L^2(\Omega))}
  + \|\rho\|_{L^\infty(0,T;L^\gamma(\Omega))}
  + \|\rho^{\alpha/2+1}\|_{L^\infty(0,T;H^1(\Omega))}
  + \sqrt\sigma\|\sqrt{\rho}\|_{L^\infty(0,T;H^1(\Omega))} \\
  &\phantom{xx}+ \sqrt{\sigma}\|v\|_{L^2(\Omega_T)} + \sqrt[4]{\sigma}\|\sqrt[4]{\rho}v\|_{L^4(\Omega_T)}
  + \sqrt\nu\|\sqrt\rho\mathbb{D}(v)\|_{L^2(\Omega_T)}
  \le C, \nonumber \\
  &\eta^{1/3}\|\rho^{-1}\|_{L^\infty(0,T;L^3(\Omega))}
  + \sqrt\delta\|\rho\|_{L^\infty(0,T;H^5(\Omega))}
  + \sqrt\eps\|\rho^{\gamma/2}\|_{L^2(0,T;H^1(\Omega))} \label{3.est2} \\
  &\phantom{xx}+ \sqrt\mu\|v\|_{L^2(0,T;H^2(\Omega))}
  + \sqrt{\eps\delta}\|\rho\|_{L^2(0,T;H^6(\Omega))}
  + \sqrt{\eps\kappa}\|\rho^{\alpha/2+1}\|_{L^2(0,T;H^2(\Omega))} \nonumber \\
  &\phantom{xx}
  +\sqrt{\eps \sigma}\|\sqrt{\rho}\|_{L^2(0,T;H^2(\Omega))}+\sqrt[4]{\eps\sigma}\|\sqrt[4]{\rho}\|_{L^4(0,T;W^{1,4}(\Omega))}
  \le C. \nonumber 
\end{align}
If $\alpha<0$, we also have the bound
\begin{align*}
  \sqrt[4]{\eps\kappa}\|\rho^{\alpha/4+1/2}
  \|_{L^4(0,T;W^{1,4}(\Omega))} \le C.
\end{align*}
\end{lemma}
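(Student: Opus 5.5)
The plan is to read all the bounds directly off the energy inequality \eqref{eq: 3.ei} and then convert the dissipated quantities into the stated norms. The right-hand side of \eqref{eq: 3.ei} is bounded by $E_{\delta,\eta}((\rho_n,v_n)(0))$, because the last two terms are nonpositive by inequality \eqref{2.ineq}. This requires $-30/19<\alpha\le0$ (Theorem \ref{theorem.ineq} with $d=3$, and the trivial case $\alpha=0$). For the $\sigma$-term we use the case $\alpha=-1$, which lies in the admissible range. Since $\rho_n(0)=\rho^0$ and $v_n(0)=P_nv^0$, and $P_n$ is an $L^2$ projection, we get $\int\rho^0|P_nv^0|^2\le\|\rho^0\|_{L^\infty}\|v^0\|_{L^2}^2$. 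Hence the initial energy is bounded independently of $n$; one fixes the smooth, positive initial data, and since $\delta,\eta\le1$ are attached to smooth data the terms $\frac\eta4\int(\rho^0)^{-3}$ and $\frac\delta2\|\na\Delta^2\rho^0\|^2$ stay bounded uniformly. All terms on the left of \eqref{eq: 3.ei} are nonnegative. Therefore each of them, taken with its supremum in $t\in(0,T_n)$ or integrated over $(0,T_n)$, is bounded by a constant $C$ independent of $n,\delta,\eta,\eps,\mu$.

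Next I translate the energy components into norms.
\begin{itemize}
\item The kinetic part gives $\sqrt\rho v\in L^\infty L^2$, and $h(\rho)$ gives $\|\rho\|_{L^\infty L^\gamma}$.
\item The capillary energy gives $\|\na\rho^{\alpha/2+1}\|_{L^\infty L^2}$. To upgrade to the full $H^1$ norm we need an $L^2$ bound on $\rho^{\alpha/2+1}$. Since $0\le\alpha/2+1\le1$, we have $\rho^{\alpha/2+1}\le 1+\rho$, so $\|\rho^{\alpha/2+1}\|_{L^2}\le C(1+\|\rho\|_{L^2})$. If $\gamma<2$, I use instead $\rho^{2(\alpha/2+1)}\le 1+\rho^\gamma$ whenever $\alpha+2\le\gamma$; otherwise I control the mean, which is bounded by mass conservation ($\int\rho_n=\int\rho^0$ from \eqref{eq: 2.mass}), and apply Poincaré--Wirtinger.
\item Similarly $2\sigma|\na\sqrt\rho|^2$ plus the conserved mass $\|\sqrt\rho\|_{L^2}^2=\int\rho^0$ gives $\sqrt\sigma\|\sqrt\rho\|_{L^\infty H^1}$.
\item The $\eta$ term gives $\eta\|\rho^{-1}\|_{L^3}^3\le C$, hence $\eta^{1/3}\|\rho^{-1}\|_{L^\infty L^3}\le C$.
\item The $\delta$ term controls $\|\na\Delta^2\rho\|_{L^2}$. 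With the mass bound and elliptic regularity on the torus this gives $\sqrt\delta\|\rho\|_{L^\infty H^5}$.
\end{itemize}

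The dissipation terms are handled the same way.
\begin{itemize}
\item They directly yield the drag bounds on $\sqrt\sigma v$ and $\sqrt[4]\sigma\rho^{1/4}v$, and the bound on $\sqrt{\nu\rho}\D(v)$.
\item $\eps|\na\rho^{\gamma/2}|^2$ together with $\|\rho^{\gamma/2}\|_{L^2}^2=\|\rho\|_{L^\gamma}^\gamma$ gives the $L^2H^1$ bound.
\item $\mu\|\Delta v\|^2$ combined with $\|v\|_{L^2L^2}\le C/\sqrt\sigma$ gives $\sqrt\mu\|v\|_{L^2H^2}$ by elliptic regularity; here the constant is allowed to depend on $\sigma$.
\item $\eps\delta\|\Delta^3\rho\|^2$ together with the mass gives the $H^6$ bound.
\item The Korteweg dissipation $C_0\eps\kappa\int|\Delta\rho^{\alpha/2+1}|^2$, with the $L^\infty H^1$ bound above, gives $\sqrt{\eps\kappa}\|\rho^{\alpha/2+1}\|_{L^2H^2}$.
\item For $\alpha<0$, the term $|\na\rho^{\alpha/4+1/2}|^4$ gives the $W^{1,4}$ seminorm. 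The $L^4$ norm of $\rho^{\alpha/4+1/2}$ is $\|\rho^{\alpha/2+1}\|_{L^2}^{1/2}$, which is already bounded.
\item The analogous quantum terms give $\sqrt{\eps\sigma}\|\sqrt\rho\|_{L^2H^2}$ and $\sqrt[4]{\eps\sigma}\|\rho^{1/4}\|_{L^4W^{1,4}}$. Here $\|\rho^{1/4}\|_{L^4}^4=\int\rho$.
\end{itemize}

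The main obstacle is routine bookkeeping rather than a genuine difficulty. The one delicate point is justifying that the inequality \eqref{2.ineq}, proved for positive smooth functions, applies to $\rho_n$. This holds because the Galerkin density is smooth and bounded below by a positive constant on $[0,T_n]$. The other delicate point is checking the lower-order (mean value) parts of the Sobolev norms via mass conservation and Poincaré's inequality.
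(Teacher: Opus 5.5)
Your proposal is correct and takes the same route as the paper. The paper gives no argument beyond saying that the bounds follow from the energy inequality \eqref{eq: 3.ei} once \eqref{2.ineq} is applied, including $\alpha=-1$ for the $\sigma$-term, and you have filled in the bookkeeping it leaves implicit: the uniform initial energy, mass conservation with Poincar\'e--Wirtinger and Fourier bounds on the torus for the lower-order parts, and the harmless $\sigma$-dependence of the $\sqrt\mu\|v\|_{L^2(0,T;H^2(\Omega))}$ bound.
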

These estimates allow us to extend the local solution to a global one on $[0,T]$.

\begin{lemma}\label{lem.n}
Let $-30/19<\alpha\le 0$. For every $T>0$, there exists a constant $C(\eps)>0$, independent of $n$  and $(\delta,\eta,\mu)$ but depending on $\eps$, such that
\begin{align*}
  \|\rho_n^{\alpha/2+1}\|_{L^{10}(\Omega_T)}+ \|\na\rho_n^{\alpha/2+1}\|_{L^{10/3}(\Omega_T)}
  &\le C(\eps).
\end{align*}
\end{lemma}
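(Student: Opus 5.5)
Set $w=\rho_n^{\alpha/2+1}$. The plan is to interpolate between two bounds that the previous lemma already supplies uniformly in $n$ and $(\delta,\eta,\mu)$. The first is the energy-level bound $w\in L^\infty(0,T;H^1(\Omega))$. The second is the $\eps$-dependent dissipation bound $\sqrt{\eps\kappa}\,\|w\|_{L^2(0,T;H^2(\Omega))}\le C$, which comes from the term $C_0\eps\kappa\int|\Delta\rho^{\alpha/2+1}|^2$ in \eqref{eq: 3.ei}. Both estimates come from inequality \eqref{2.ineq}, so they hold in the admissible range $-30/19<\alpha\le 0$. The $L^2(H^2)$ bound needs a short justification on $\T^3$. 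We have
\begin{align*}
\|\na^2 w\|_{L^2}=\|\Delta w\|_{L^2}
\end{align*}
on the torus. The mean of $w$ is controlled by the $L^\infty(L^2)$ norm of $w$, which follows from $w\in L^\infty(H^1)$ (or from $\rho\in L^\infty(L^\gamma)$ together with the gradient bound via Poincaré). Hence $\|w\|_{H^2}\le C(\|\Delta w\|_{L^2}+\|w\|_{H^1})$, which gives $\|w\|_{L^2(0,T;H^2)}\le C(\eps)$.

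The interpolation then runs in the standard parabolic way in three dimensions. For the gradient, Gagliardo--Nirenberg gives
\begin{align*}
\|\na w\|_{L^{10/3}}\le C\|\na w\|_{L^2}^{2/5}\|\na w\|_{H^1}^{3/5}.
\end{align*}
The exponent $3/5$ comes from $\tfrac{3}{10}=\tfrac12-\tfrac{\theta}{3}$, so $\theta=\tfrac35$. Raising to the power $10/3$ and integrating in time gives $\int_0^T\|\na w\|_{L^{10/3}}^{10/3}dt\le C\|\na w\|_{L^\infty(L^2)}^{4/3}\|w\|_{L^2(H^2)}^{2}$, which is bounded by $C(\eps)$. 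For $w$ itself I would use $H^1\subset L^6$ and $H^2\subset L^\infty$. Interpolating $\|w\|_{L^{10}}\le\|w\|_{L^6}^{3/5}\|w\|_{L^\infty}^{2/5}$ gives $\|w\|_{L^{10}}^{10}\le C\|w\|_{H^1}^6\|w\|_{H^2}^4$, which is not integrable from $L^2$ in time. I would instead use Gagliardo--Nirenberg directly, $\|w\|_{L^{10}}\le C\|w\|_{H^1}^{4/5}\|w\|_{H^2}^{1/5}$. Here the scaling check is $\tfrac1{10}=\tfrac12-\tfrac{1}{3}(1\cdot(1-\theta)+2\theta)$, so $1+\theta=\tfrac65$ and $\theta=\tfrac15$. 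This yields $\|w\|_{L^{10}}^{10}\le C\|w\|_{H^1}^8\|w\|_{H^2}^2$, which is integrable. Note that $L^{10}$ and $L^{10/3}$ are exactly the exponents that make the time power of the $H^2$ factor equal to $2$.

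The only real obstacle is the first step, namely that the $H^2$ control is genuinely available uniformly in $(\delta,\eta,\mu)$ and $n$ with an $\eps$-dependent constant. This requires the energy inequality \eqref{eq: 3.ei} with \eqref{2.ineq} on the Galerkin level, for which $\rho_n$ is smooth and positive, so the inequality applies. It also requires that the extra terms $2\eps\sigma\int\na\rho_n\cdot\na(\Delta\sqrt{\rho_n}/\sqrt{\rho_n})$ have a favorable sign. That is the case $\alpha=-1$ of \eqref{2.in}, which lies in the admissible range. Everything else is routine interpolation.
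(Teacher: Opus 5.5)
Your proposal is correct and follows essentially the same route as the paper. Both apply Gagliardo--Nirenberg interpolation between the uniform $L^\infty(0,T;H^1(\Omega))$ bound and the $\eps$-dependent $L^2(0,T;H^2(\Omega))$ bound for $\rho_n^{\alpha/2+1}$, with $\theta=1/5$ and $\theta=3/5$ chosen so that the $H^2$ factor appears squared in time; the only differences are cosmetic (you interpolate between $H^1$ and $H^2$ rather than $L^6$ and $H^2$, and you spell out why $\|\Delta w\|_{L^2}$ together with the $H^1$ bound controls the full $H^2$ norm on $\T^3$).
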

\begin{proof}
    We know from the energy inequality \eqref{eq: 3.ei} that $(\rho_n^{\alpha/2+1})$ is bounded in $L^2(0,T;H^2(\Omega))$ $\cap L^\infty(0,T;H^1(\Omega))$. Then, by the Gagliardo--Nirenberg inequality with $\theta=(q-6)/(2q)$ and $q>6$, 
\begin{align*}
  \|\rho_n^{\alpha/2+1}\|_{L^q(\Omega_T)}^q
  &\le C\int_0^T\|\rho_n^{\alpha/2+1}\|_{H^2(\Omega)}^{\theta q}
  \|\rho_n^{\alpha/2+1}\|_{L^6(\Omega)}^{(1-\theta)q}dt \\
  &\le C\|\rho_n^{\alpha/2+1}\|_{L^\infty(0,T;H^1(\Omega))}^{(1-\theta)q}
  \int_0^T\|\rho_n^{\alpha/2+1}\|_{H^2(\Omega)}^{\theta q}dt \le C(\eps),
\end{align*}
and the last step follows from estimate \eqref{3.est2} and the choice $q=10$, which yields $\theta q=2$. By the Gagliardo--Nirenberg inequality with $\theta=3(q-2)/(2q)$,
\begin{align*}
  \|\na\rho_n^{\alpha/2+1}\|_{L^{q}(\Omega_T)}^{q}
  &\le C\int_0^T\|\na\rho_n^{\alpha/2+1}\|_{H^1(\Omega)}^{\theta q}
  \|\na\rho_n^{\alpha/2+1}\|_{L^2(\Omega)}^{(1-\theta)q}dt \\
  &\le C\|\na\rho_n^{\alpha/2+1}
  \|_{L^\infty(0,T;L^2(\Omega))}^{(1-\theta)q}\int_0^T
  \|\na\rho_n^{\alpha/2+1}\|_{H^1(\Omega)}^{\theta q}dt
  \le C(\eps),
\end{align*}
if $\theta q=2$, giving $q=10/3$.
\end{proof}

The limit $n\to\infty$ can be carried out as in \cite[Sec.~2]{VaYu16}, except for the Korteweg contribution. In \cite{VaYu16}, the strong convergence of $(\rho_n)$ and $(\rho_n v_n)$ follows from the Aubin--Lions lemma, relying on the estimate $L^q(0,T;H^{-s}(\Omega))$ for the time derivatives of $\rho_n$ and $\rho_n v_n$ for some $q>1$ and $s\in\N$. This same estimate remains valid in the present setting, since the additional contribution from the Korteweg tensor satisfies $\diver\K(\rho_n)\in L^{10/9}(0,T; W^{-1,3}(\Omega))$. Indeed, let $\psi\in L^{10}(0,T;W^{1,3}(\Omega))$, then
\begin{align*}
  \bigg|&\int_0^T\int_\Omega\diver\mathbb{K}(\rho_n)\cdot\psi 
  dxdt\bigg|
  = \bigg|\kappa\int_0^T\int_\Omega\rho_n\psi\cdot\na
  \bigg(\rho_n^{\alpha/2}\frac{\Delta\rho_n^{\alpha/2+1}}{
  \alpha/2+1}\bigg)dxdt\bigg| \\
  &= \frac{\kappa}{\alpha/2+1}\bigg|\int_0^T\int_\Omega
  \rho_n^{\alpha/2}\Delta\rho_n^{\alpha/2+1}
  (\na\rho_n\cdot\psi + \rho_n\diver\psi)dxdt\bigg| \\
  &= \frac{\kappa}{\alpha/2+1}\bigg|\int_0^T\int_\Omega
  \Delta\rho_n^{\alpha/2+1}\bigg(\frac{\na\rho_n^{\alpha/2+1}}{
  \alpha/2+1}\cdot\psi + \rho_n^{\alpha/2+1}\diver\psi\bigg)
  dxdt\bigg| \\
  &\le C\|\Delta\rho_n^{\alpha/2+1}\|_{L^2(\Omega_T)}
  \big(\|\na\rho_n^{\alpha/2+1}\|_{L^{5/2}(\Omega_T)}
  \|\psi\|_{L^{10}(\Omega_T)}
  + \|\rho_n^{\alpha/2+1}\|_{L^{10}(\Omega_T)}
  \|\diver\psi\|_{L^{5/2}(\Omega_T)}\big) \\
  &\le C\|\psi\|_{L^{10}(0,T;W^{1,3}(\Omega))}.
\end{align*}

We now turn to the convergence of the Korteweg term. First, we observe that the density $\rho_n$ is uniformly positive a.e.\ thanks to the following estimate:
\begin{align*}
  \|\rho^{-1}\|_{L^\infty(\Omega)}
  \le C\big(1+\|\rho^{-1}\|_{L^3(\Omega)}\big)^3
  \big(1+\|\rho\|_{H^k(\Omega)}\big)^2,
\end{align*}
which holds for $\rho\in H^k(\Omega)$ with $k>7/2$ such that $\rho^{-1}\in L^3(\Omega)$ \cite[formula (13)]{BrDe06}. We deduce from the estimate
\begin{align}\label{3.rho}
  \eta^{1/3}\|\rho_n^{-1}\|_{L^\infty(0,T;L^3(\Omega))}
  + \sqrt\delta\|\rho_n\|_{L^\infty(0,T;H^5(\Omega))} \le C(\rho^0,v^0),
\end{align}
which is a consequence of \eqref{eq: 3.ei}, that
\begin{align*}
  \|\rho_n^{-1}\|_{L^\infty(\Omega)}
  \le C(1+C\eta^{-1/3})^3(1+C\delta^{-1/2})^2
  \le C(1+\eta^{-1})(1+\delta^{-1}) =: C_{\delta,\eta}.
\end{align*}

Since $(\na\rho_n)$ is bounded in $L^2(0,T;H^1(\Omega;\R^3))$ and $(\pa_t(\na\rho_n))$ is bounded in $L^1(0,T;$ $H^{-s}(\Omega;\R^3))$ for some $s\in\N$, the Aubin--Lions lemma yields the existence of a subsequence that is not relabeled such that, as $n\to\infty$,
\begin{align*}
  \na\rho_n\to\na\rho\quad\mbox{strongly in }L^2(\Omega_T).
\end{align*}
The uniform positive bound implies that 
\begin{align*}
  \na\rho_n^{\alpha/2+1}\to\na\rho^{\alpha/2+1}
  \quad\mbox{strongly in }L^{2}(0,T;L^{6}(\Omega)).
\end{align*}
 Furthermore, Lemma \ref{lem.n} yields
\begin{align*}
  \rho_n^{\alpha/2+1}\to\rho^{\alpha/2+1}
  &\quad\mbox{strongly in }L^{9}(\Omega_T), \\
  \Delta\rho_n^{\alpha/2+1}\rightharpoonup\Delta\rho^{\alpha/2+1}
  &\quad\mbox{weakly in }L^2(\Omega_T).
\end{align*}
Therefore, for smooth test functions $\psi$,
\begin{align}
  \int_0^T\int_\Omega\diver\mathbb{K}(\rho_n)\cdot\psi dxdt
  &= \frac{\kappa}{\alpha/2+1}\int_0^T\int_\Omega
  \Delta\rho_n^{\alpha/2+1}\bigg(\frac{\na\rho_n^{\alpha/2+1}}{
  \alpha/2+1}\cdot\psi + \rho_n^{\alpha/2+1}\diver\psi\bigg)
  dxdt \nonumber \\
  &\to \frac{\kappa}{\alpha/2+1}\int_0^T\int_\Omega
  \Delta\rho^{\alpha/2+1}\bigg(\frac{\na\rho^{\alpha/2+1}}{
  \alpha/2+1}\cdot\psi + \rho^{\alpha/2+1}\diver\psi\bigg)dxdt.
  \label{3.K}
\end{align}
The limit $n\to\infty$ in the remaining terms can be performed as in \cite[Sec.~2]{VaYu16}. Summarizing, we have shown that the limit $(\rho,v)$ satisfies the following equation:
\begin{align*}
  0 &= \int_\Omega\rho^0v^0\cdot\psi(0)dxdt
  +\int_0^T\int_\Omega \bigg(\rho v\cdot\pa_t\psi +\rho v\otimes v:\na\psi\bigg) dxdt\\
  &\phantom{xx}+ \int_0^T\int_\Omega\bigg( p(\rho)\diver\psi 
  + \nu v\cdot\diver(\rho\mathbb{D}(\psi))- \kappa\rho^{\alpha/2+1}\frac{\Delta\rho^{\alpha/2+1}}{\alpha/2+1}
  \diver(\rho\psi)
  \bigg)dxdt \nonumber \\
  &\phantom{xx}- \int_0^T\int_\Omega\bigg(
    \sigma v\cdot\psi
  + \sigma\rho|v|^2v\cdot\psi + \sigma \K_Q(\rho):\nabla\psi\bigg)dxdt \nonumber \\
  &\phantom{xx}- \int_0^T\int_\Omega\big(
  \eps v\otimes\na\rho:\na\psi + \mu\Delta v\cdot\Delta\psi
  + \eta\rho^{-3}\diver\psi
  + \delta\Delta^3\rho\Delta^3\diver(\rho\psi)\big)dxdt \nonumber 
\end{align*}
for all $\psi\in C^1([0,T];X_n)$ with $\psi(T,\cdot)=0$ for all $n\in\N$. By density, the previous equation also holds for all $\psi\in L^\infty(0,T;H^4(\Omega;\R^3))\cap L^2(0,T;H^5(\Omega;\R^3))$ with $\psi(T,\cdot)=0$. 

It remains to perform the limit $n\to\infty$ in the approximate energy inequality \eqref{eq: 3.ei}. This is done by means of the weakly lower semicontinuity of convex functions. Thus, $(\rho,v)$ also satisfies \eqref{eq: 3.ei}. 


\subsection{BD entropy inequality}

We derive the BD entropy inequality for the constructed solution to the approximate system, providing additional estimates. Because of \eqref{3.rho}, the density $\rho$ is uniformly bounded away from zero, which allows us to use $\na\log\rho$ etc.\ as a test function in the approximate mass equation \eqref{eq: 2.mass}. We recall the definition of the BD entropy:
\begin{align*}
  E_{BD}(\rho,v) = \frac12\int_{\Omega}\rho|v+\nu\na\log\rho|^2 dx.
\end{align*}

\begin{lemma}[BD entropy inequality]\label{lem.BD}
There exist constants $C_1>0$ independent of $(\sigma,\delta,\eta$, $\eps,\mu)$ and $C_2(\sigma, \delta,\eta)>0$ independent of $(\eps,\mu)$ such that for $0<t<T$,
\begin{align*}
  E_{BD}((\rho,v)(&t)) + \nu \sigma\int_\Omega(\log\rho(t,x))_-dx
  + \frac{4\nu}{\gamma}\int_0^t\int_\Omega
  |\na\rho^{\gamma/2}|^2 dxd\tau \\
  &\phantom{xx}+ \frac{\nu}{4}\int_0^t\int_\Omega
  \rho|\na v-\na v^T|^2 dxd\tau
  + \eps\nu^2\int_0^t\int_\Omega\frac{(\Delta\rho)^2}{\rho}dxd\tau \\
  &\phantom{xx}+ \delta\nu\int_0^t\int_\Omega(\Delta^3\rho)^2 dxd\tau
  + \frac43\eta\nu\int_0^t\int_\Omega|\na\rho^{-3/2}|^2 dxd\tau \\
  &\phantom{xx}+ C_0\nu\kappa\int_0^t\int_\Omega\big(
  \mathrm{1}_{\alpha<0}|\na\rho^{\alpha/4+1/2}|^4
  + |\Delta\rho^{\alpha/2+1}|^2\big)dxd\tau\\
  &\phantom{xx}+ \widetilde{C}_0\nu\sigma\int_0^t\int_\Omega\big(
  |\na\rho^{1/4}|^4
  + |\Delta\sqrt{\rho}|^2\big)dxd\tau\\
  &\le C_1(1+\sigma) + \bigg(\eps+\sqrt\mu+\frac{\eps}{\sqrt\mu}\bigg)
  C_2(\sigma,\delta,\eta),
\end{align*}
recalling that $z_-=\max\{0,z\}$. 
\end{lemma}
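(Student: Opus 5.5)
The plan is the standard Vasseur--Yu derivation. The solution of the $n\to\infty$ limit problem has $\rho$ smooth and bounded below by $C_{\delta,\eta}$, so the test functions below are admissible. We write the BD entropy as
\begin{align*}
  E_{BD}=\int_\Omega\Big(\frac{\rho}{2}|v|^2+\nu v\cdot\na\rho+\frac{\nu^2}{2}\frac{|\na\rho|^2}{\rho}\Big)dx .
\end{align*}
The kinetic part is already controlled by the energy inequality \eqref{eq: 3.ei}. It therefore suffices to compute
\begin{align*}
  \frac{d}{dt}\int_\Omega\Big(\nu v\cdot\na\rho+\frac{\nu^2}{2}\frac{|\na\rho|^2}{\rho}\Big)dx .
\end{align*}
For the cross term we test the momentum equation with $\psi=\nu\na\log\rho$, i.e.\ $\rho\psi=\nu\na\rho$. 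For the last term we differentiate the mass equation in space and test with $\nu^2\na\rho/\rho$. We then add the result to \eqref{eq: 3.ei} and keep track of every term.

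The exact (non-regularized) terms will be handled as follows.
\begin{itemize}
\item The convective contributions cancel against the mass-equation contribution, as in \cite{BDL03}. What remains is the identity $\nu\diver(\rho\D v)\cdot\na\log\rho+\ldots$, which combines with $\nu\rho|\D v|^2$ from the energy into $\frac{\nu}{2}\rho|\na v-\na v^T|^2$ (the paper keeps $\nu/4$, leaving slack).
\item The pressure gives $\nu\int\na p\cdot\na\rho/\rho=\frac{4\nu}{\gamma}|\na\rho^{\gamma/2}|^2$.
\item The Korteweg term tested with $\nu\na\log\rho$ equals $\nu\kappa J(\rho)$. By Theorem~\ref{theorem.ineq} in the form \eqref{2.ineq} (valid since $-1\le\alpha\le 0>-30/19$), this gives the $C_0\nu\kappa$ dissipation. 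The quantum term $\sigma\diver\K_Q$ is the case $\alpha=-1$ and yields the $\widetilde C_0\nu\sigma$ term.
\item The drag term $-\sigma v\cdot\nu\na\log\rho$ is rewritten using the mass equation as $\nu\sigma\frac{d}{dt}\int\log\rho$ plus $\eps$-terms. Since $\int(\log\rho)_+\le\int\rho\le C$ (mass conservation), this produces the $\nu\sigma\int(\log\rho)_-$ term with a bound $C_1\sigma$.
\item The cubic drag $\sigma\nu\int|v|^2v\cdot\na\rho$ is bounded by $\sigma\|\rho^{1/4}v\|_{L^4}^3\|\na\rho^{1/4}\|_{L^4}$. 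Young's inequality absorbs the gradient part into the $\widetilde C_0\nu\sigma$ dissipation and bounds the rest by the energy estimate $\sigma\|\sqrt[4]\rho v\|_{L^4}^4\le C$.
\end{itemize}

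The regularization terms give the remaining contributions.
\begin{itemize}
\item $\eta\na\rho^{-3}$ and $\delta\rho\na\Delta^5\rho$, tested with $\nu\na\log\rho$, give the nonnegative terms $\frac43\eta\nu|\na\rho^{-3/2}|^2$ and $\delta\nu(\Delta^3\rho)^2$ after integration by parts.
\item $\eps\Delta\rho$ in the mass equation, paired with $\nu^2\na\rho/\rho$, gives $\eps\nu^2(\Delta\rho)^2/\rho$ plus a term $\eps\nu^2\int|\na\rho|^2\Delta\rho/\rho^2$-type. The latter is bounded by $\eps C(\delta,\eta)$ using $\rho\ge C_{\delta,\eta}^{-1}$ and the $H^5$ bound.
\item The $\eps$-diffusion and the correction $\eps\diver(v\otimes\na\rho)$ interact with the cross term $v\cdot\na\rho$. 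This produces terms such as $\eps\int\na v:\na^2\rho$ and $\eps\int(\Delta\rho)(v\cdot\na\rho)/\rho$. They are bounded by $\eps\|\na v\|_{L^2}\|\rho\|_{H^2}$. Since only $\sqrt\mu\|v\|_{L^2H^2}\le C$ is available, this gives $(\eps/\sqrt\mu)C_2(\sigma,\delta,\eta)$.
\item The term $-\mu\Delta^2v$ tested with $\nu\na\log\rho$ is bounded by $\mu\|\Delta v\|_{L^2}\|\Delta\na\log\rho\|_{L^2}\le\sqrt\mu\,(\sqrt\mu\|\Delta v\|)\,C(\delta,\eta)$. Here $\na^3\log\rho$ is controlled by the $\delta$-bound on $\rho$ in $H^5$ and the lower bound on $\rho$. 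This gives the $\sqrt\mu C_2$ term.
\end{itemize}

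The main obstacle is the bookkeeping of the regularization terms. Each must land either in a nonnegative dissipation or in the error $(\eps+\sqrt\mu+\eps/\sqrt\mu)C_2$, and $C_1$ must stay independent of $\sigma,\delta,\eta$. The delicate point is the drag/log term: the sign of $\int\log\rho$ must produce the $(\log\rho)_-$ control, and the $\eps$-remainder $\eps\sigma\nu\int|\na\rho|^2/\rho^2$ must be put into $C_2$. I would first try to write every $\eps$- and $\mu$-remainder explicitly via the $\delta$- and $\eta$-bounds \eqref{3.rho}, and only absorb into dissipation where $\sigma$- or $\kappa$-uniformity is required. The initial BD entropy is bounded by $C_1$ via \eqref{eq: 1.regIC}, since $\na\sqrt{\rho^0}\in L^2$.
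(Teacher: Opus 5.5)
Your proposal is correct and follows essentially the paper's route: you test with $\nu\na\log\rho$, apply Theorem~\ref{theorem.ineq} to both the $\kappa$- and the $\sigma$-Korteweg terms, bound the $\eps$- and $\mu$-remainders with $(\delta,\eta)$-dependent constants, and use the $\log\rho$ identity for the linear drag. The only real deviation is the cubic drag, which you absorb into the $\widetilde C_0\nu\sigma\int_0^t\!\int_\Omega|\na\rho^{1/4}|^4$ dissipation (halving $\widetilde C_0$), whereas the paper integrates by parts and uses the energy bounds on $\sigma\rho|v|^4$ and $\nu\rho|\D(v)|^2$; both work.

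One harmless correction: the energy dissipation $\nu\rho|\D(v)|^2$ plus the cross-term contribution $-\nu\rho\,\na v:\na v^T$ equals exactly $\nu\rho|A(v)|^2=\frac{\nu}{4}\rho|\na v-\na v^T|^2$, where $A(v)=\frac12(\na v-\na v^T)$. So the constant $\nu/4$ in the statement is exact, not slack, and your claimed $\nu/2$ is off by a factor of two.
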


\begin{proof}
    The BD entropy {\em equality} for sufficiently smooth solutions has been derived in several papers; see, e.g., \cite{BrDe06,JuPh26,Zat12}. We omit the technical calculation and just state the result, including the Korteweg term:
\begin{align}\label{eq: 3.bd}
    \frac12&\int_\Omega\rho|v+\nu\na\log\rho|^2 dx\Big|_0^t
    + \nu\int_0^t\int_\Omega\rho|\mathbb{D}(v)|^2 dxd\tau 
    + \frac{\nu}{4}\int_0^t\int_\Omega\rho|\na v-\na v^T|^2 dxd\tau \\
    &\phantom{xx}+ \frac{4\nu}{\gamma}\int_0^t\int_\Omega
    |\na\rho^{\gamma/2}|^2 dxd\tau
    + \eps\nu^2\int_0^t\int_\Omega\frac{(\Delta\rho)^2}{\rho}dxd\tau
    + \delta\nu\int_0^t\int_\Omega(\Delta^3\rho)^2 dxd\tau \nonumber \\
    &\phantom{xx}+ \frac43\eta\nu\int_0^t\int_\Omega
    |\na\rho^{-3/2}|^2 dxd\tau
    - 2\nu\sigma\int_0^t\int_\Omega\na\rho\cdot\na\bigg(
    \frac{\Delta\sqrt{\rho}}{\sqrt{\rho}}
    \bigg)dxd\tau \nonumber \\
    &\phantom{xx}- \nu\kappa\int_0^t\int_\Omega\na\rho\cdot\na\bigg(
    \rho^{\alpha/2}\frac{\Delta\rho^{\alpha/2+1}}{\alpha/2+1}
    \bigg)dxd\tau \nonumber\\
    &= \frac{\eps}{2}\nu^2\int_0^t\int_\Omega
    \Delta\rho|\na\log\rho|^2 dxd\tau
    - \nu \sigma\int_0^t\int_\Omega v\cdot\na\log\rho dxd\tau \nonumber \\
    &\phantom{xx}- \nu \sigma\int_0^t\int_\Omega |v|^2 v\cdot\na\rho dxd\tau
    - \nu\mu\int_0^t\int_\Omega\Delta^2 v\cdot\na\log\rho dxd\tau
    \nonumber \\
    &\phantom{xx}+ \eps\nu\int_0^t\int_\Omega
    \na v:(\na\rho\otimes\na\log\rho) dxd\tau
    - \eps\nu\int_0^t\int_\Omega\diver v\Delta\rho dxd\tau
    + \frac12\int_\Omega\rho|v|^2 dx\Big|_0^t \nonumber \\
    &=: I_1 + \cdots + I_7. \nonumber 
\end{align}
In view of inequality \eqref{2.ineq}, the last two terms on the left-hand side are nonnegative:
\begin{align*}
  - \nu\kappa&\int_0^t\int_\Omega\na\rho\cdot\na\bigg(
  \rho^{\alpha/2}\frac{\Delta\rho^{\alpha/2+1}}{\alpha/2+1}\bigg)dxds \\
  &\ge C_0\nu\kappa\int_0^t\int_\Omega
  \big(\mathrm{1}_{\alpha<0}|\na\rho^{\alpha/4+1/2}|^4
  + |\Delta\rho^{\alpha/2+1}|^2\big)dxds, \\
  - 2\nu\sigma&\int_0^t\int_\Omega\na\rho\cdot\na\bigg(
 \frac{\Delta\sqrt{\rho}}{\sqrt\rho}\bigg)dxds \ge \widetilde{C}_0\nu\sigma\int_0^t\int_\Omega
  \big(|\na\rho^{1/4}|^2
  + |\Delta\sqrt{\rho}|^2\big)dxds.
\end{align*}

In the following, we estimate the terms $I_1,\ldots,I_7$. The energy inequality \eqref{eq: 3.ei} shows that $I_7\le C(\rho^0,v^0)$. The terms $I_1$, $I_4$, $I_5$, and $I_6$ correspond to $J_1$, $J_2$, $J_3$, and $J_4$, respectively, in \cite[Sec.~3.3]{JuPh26}, leading to
\begin{align*}
  I_1 &\le C\eps\nu^2\|\Delta\rho\|_{L^2(\Omega_T)}
  \|\rho^{-1}\|_{L^\infty(\Omega_T)}^2\|\na\rho\|_{L^2(\Omega_T)}^2
  = C(\delta,\eta)\eps,\\
  I_4 &= -\nu\mu\int_0^t\int_\Omega\Delta v\cdot\na\Delta\log\rho dxds
  \le \nu\mu\|\Delta v\|_{L^2(\Omega_T)}
  \|\na\Delta\log\rho\|_{L^2(\Omega_T)} 
  \le C(\delta,\eta)\sqrt\mu,\\
  I_5 &\le C\eps\|\sqrt\rho\mathbb{D}(v)\|_{L^2(\Omega_T)}
  \|\na\sqrt[4]{\rho}\|_{L^4(\Omega_T)}^2
  \le C(\sigma)\eps^{1/2},\\
  I_6 &\le C\eps\|\rho\|_{L^2(0,T;H^1(\Omega))}
  \|\Delta v\|_{L^2(\Omega_T)}\le C(\sigma, \delta)\eps \mu^{-1/2}.
\end{align*}
The terms $I_2$ and $I_3$ correspond to $K_3$ and $K_4$ in \cite[Sec.~3.3]{JuPh26}, respectively, where it is shown that
\begin{align*}
  I_2 &\le C \sigma + C(\delta,\eta)\eps - \nu \sigma\int_\Omega
  (\log\rho(t,x))_- dx, \\
  I_3 &\le \nu \sigma\int_0^t\int_\Omega\bigg(\frac34\rho|v|^4
  + \frac52\rho|\mathbb{D}(v)|^2\bigg)dxds
  \le C(1+\sigma).
\end{align*}
Collecting these estimates in \eqref{eq: 3.bd} finishes the proof.
\end{proof}

The bounds of Lemma \ref{lem.n} depend on $\eps$. This can be improved by means of the BD entropy inequality \eqref{eq: 3.ei}. Indeed, the $\eps$-dependence comes from the fact that the integral
\begin{align*}
  \int_0^t\int_\Omega\big(\mathrm{1}_{\alpha<0}|\na\rho^{\alpha/4+1/2}|^2
  + |\Delta\rho^{\alpha/2+1}|^2\big)dxds
\end{align*}
is bounded from above by $C/\eps$. Lemma \ref{lem.BD} shows that this integral is bounded by 
\begin{align*}
  C_1(1+\sigma) + \bigg(\eps+\sqrt\mu+\frac{\eps}{\sqrt\mu}\bigg)
  C_2(\sigma,\delta,\eta),
\end{align*}
which becomes independent of $(\delta,\eta)$ if $(\eps,\mu)\to 0$ with $\eps/\sqrt\mu\to 0$ . This limit is performed in the following section.


\subsection{Limit $(\delta,\eta,\eps,\mu)\to 0$}\label{sec.lim}

The results of the previous section allow us to perform the limit $(\eps,\mu)\to0$ under the condition $\eps/\sqrt{\mu}\to0$. As in \cite[Sec.~3.2]{VaYu16}, this removes the dependence of the right-hand side of the BD entropy inequality on $(\delta,\eta)$ and yields estimates that are uniform with respect to these parameters. We can therefore pass to the limit $(\delta,\eta)\to0$ similarly as in \cite[Sec.~3.3]{VaYu16}. As before, the only remaining difficulty is the convergence of the Korteweg term. For the limit $(\varepsilon,\mu)\to0$, the argument follows the same strategy as in the Faedo--Galerkin limit $n\to\infty$. In the case $\delta=\eta$, the limit $\delta\to0$ can again be carried out by means of the Aubin--Lions theorem, which gives the strong convergence $\rho_\delta\to\rho$ in $L^2(\Omega_T)$. This and the following convergences hold up to subsequences. Consequently, Lemma~\ref{lem.n} yields
\begin{align*}
    \rho_\delta^{\alpha/2+1} \rightarrow \rho^{\alpha/2+1}\quad\mbox{strongly in }L^p(\Omega_T)\mbox{ for all }p<10.
\end{align*}
Moreover, by Lemma~\ref{lem.BD}, 
\begin{align*}
    \Delta \rho_\delta^{\alpha/2+1} \rightarrow \Delta \rho^{\alpha/2+1} \quad\mbox{weakly in }L^2(\Omega_T).
\end{align*}
The convergence of $\nabla \rho^{\alpha/2+1}_\delta$ requires a different argument, since the positive lower bound of the density is no longer available. At this stage, the Aubin--Lions theorem could still be applied, since we can bound $\partial_t \rho^{\alpha/2+1}_\delta$. However, because this estimate is lost in the subsequent limit $\sigma\to0$, we use a different argument here.

\begin{lemma}\label{lem.strConvGradient}
It holds that, as $\delta \to 0$,
\begin{align*}
    \nabla \rho_\delta^{\alpha/2+1} \rightarrow\nabla\rho^{\alpha/2+1}  \quad \mbox{strongly in } L^p(\Omega_T)\ \mbox{for all }  p<10/3.
\end{align*}
\end{lemma}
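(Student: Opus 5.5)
Set $\lambda_\delta:=\rho_\delta^{\alpha/2+1}$ and $\lambda:=\rho^{\alpha/2+1}$. The plan is to get strong $L^2(\Omega_T)$ convergence of $\na\lambda_\delta$ by an energy-type identity on the torus. Then interpolate with the uniform $L^{10/3}(\Omega_T)$ bound of Lemma~\ref{lem.n} to reach every $p<10/3$. By Lemma~\ref{lem.BD}, after the limit $(\eps,\mu)\to0$, the bounds on $\|\Delta\lambda_\delta\|_{L^2(\Omega_T)}$ and $\|\lambda_\delta\|_{L^\infty(0,T;H^1(\Omega))}$ are uniform in $\delta$. The Gagliardo--Nirenberg argument of Lemma~\ref{lem.n} then gives $\na\lambda_\delta$ bounded in $L^{10/3}(\Omega_T)$ and in $L^2(0,T;H^1(\Omega))$, uniformly in $\delta$. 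From the strong convergence $\lambda_\delta\to\lambda$ in $L^p(\Omega_T)$, $p<10$, and the weak convergence $\Delta\lambda_\delta\rightharpoonup\Delta\lambda$ in $L^2(\Omega_T)$, we get $\na\lambda_\delta\rightharpoonup\na\lambda$ weakly in $L^2(\Omega_T)$, since the distributional limit is identified.

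The key step is convergence of the norms. On $\T^3$, integration by parts gives, for a.e. $t$,
\begin{align*}
  \int_0^T\int_\Omega|\na\lambda_\delta|^2dxdt
  = -\int_0^T\int_\Omega\lambda_\delta\Delta\lambda_\delta\,dxdt .
\end{align*}
The right-hand side is a product of a strongly convergent sequence in $L^2(\Omega_T)$, as $\lambda_\delta\to\lambda$ strongly in $L^p$ with $p<10$, $p\ge2$, and a weakly convergent sequence in $L^2(\Omega_T)$. Hence it converges to $-\int_0^T\int_\Omega\lambda\Delta\lambda\,dxdt=\int_0^T\int_\Omega|\na\lambda|^2dxdt$. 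This is legitimate because $\lambda\in L^2(0,T;H^2(\Omega))$ by weak lower semicontinuity. Weak convergence together with convergence of norms in the Hilbert space $L^2(\Omega_T)$ yields $\na\lambda_\delta\to\na\lambda$ strongly in $L^2(\Omega_T)$.

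For $2<p<10/3$, we interpolate:
\begin{align*}
  \|\na\lambda_\delta-\na\lambda\|_{L^p}\le\|\na\lambda_\delta-\na\lambda\|_{L^2}^{1-\vartheta}\|\na\lambda_\delta-\na\lambda\|_{L^{10/3}}^{\vartheta},
\end{align*}
with $\vartheta\in(0,1)$. The second factor is bounded by the uniform estimate and Fatou's lemma. For $p\le2$, the claim follows from Hölder's inequality on the bounded domain $\Omega_T$.

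The main obstacle is ensuring that the $\delta$-uniform bounds really hold without the positive lower bound on $\rho_\delta$. In particular, the $L^{10/3}$ bound must not depend on $\eps$, unlike the original statement of Lemma~\ref{lem.n}. Here I would redo the proof of Lemma~\ref{lem.n} using the $L^2(\Omega_T)$ bound on $\Delta\lambda_\delta$ from Lemma~\ref{lem.BD}. On the torus this controls $\|\na\lambda_\delta\|_{L^2(0,T;H^1)}$ via $\|\na^2 f\|_{L^2}=\|\Delta f\|_{L^2}$, and the mean of $\lambda_\delta$ is controlled by mass conservation and the $L^\gamma$ bound. The identification of the weak limit of $\Delta\lambda_\delta$ with $\Delta\lambda$ needs only strong convergence of $\lambda_\delta$ in $L^1$, so no lower density bound enters.
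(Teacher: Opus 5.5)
Your argument is correct and is essentially the paper's: weak convergence combined with convergence of the $L^2(\Omega_T)$ norms through the identity $\int_0^T\int_\Omega|\nabla\lambda_\delta|^2dxdt=-\int_0^T\int_\Omega\lambda_\delta\Delta\lambda_\delta\,dxdt$ (strong times weak), followed by interpolation with the $\delta$-uniform $L^{10/3}$ bound. The only difference is that the paper works with $\rho_\delta^{\theta/2}\nabla\rho_\delta^{\theta/2}$ and gets its weak limit from the $L^4$ bound on $\nabla\rho_\delta^{\alpha/4+1/2}$, whereas you identify the weak limit of $\nabla\lambda_\delta$ directly; this is slightly cleaner and does not need that bound, so it also covers $\alpha=0$.
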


\begin{proof}
    The argument follows \cite[Lem.~5.2]{ABS25}. Set $\theta = \alpha/2+1$. Since $\nabla \rho^{\theta} = 2\rho^{\theta/2} \nabla \rho^{\theta/2}$, it is sufficient to prove the strong convergence of $\rho_\delta^{\theta/2}\nabla\rho_\delta^{\theta/2}$ in $L^2(\Omega_T)$ and then to improve the space of convergence by the uniform estimate $\|\nabla \rho^{\alpha/2+1}\|_{L^{10/3}(\dom_T)}\leq C$ from Lemma~\ref{lem.n}.
    We deduce from the strong convergence of $\rho_\delta$ as well as Lemma~\ref{lem.n} that
    \begin{align*}
        \rho_{\delta}^{\theta/2} \rightarrow \rho^{\theta/2} \quad\mbox{strongly in } L^{4}(\dom_T),\\
        \nabla \rho_\delta^{\theta/2}\rightarrow  \nabla \rho^{\theta/2}\quad\mbox{weakly in }L^4(\Omega_T).
    \end{align*}
    This shows that $ \rho_\delta^{\theta/2} \nabla \rho_\delta^{\theta/2}\rightharpoonup \rho^{\theta/2} \nabla \rho^{\theta/2}$ weakly in $L^2(\Omega_T)$. 
    To prove strong convergence, we compute
    \begin{align*}
        \|\rho_{\delta}^{\theta/2} &\nabla \rho_\delta^{\theta/2} - \rho^{\theta/2} \nabla \rho^{\theta/2}\|_{L^2(\Omega_T)}^2
        = \int_0^T\int_\dom \rho_\delta^{\theta} |\nabla\rho_\delta^{\theta/2}|^2dxdt \\
        &\phantom{xx}- 2\int_0^T\int_\dom (\rho_\delta^{\theta/2} \nabla \rho_{\delta}^{\theta/2}) \cdot (\rho^{\theta/2} \nabla \rho^{\theta/2}) dxdt 
        + \int_0^T\int_\dom \rho^{\theta} |\nabla\rho^{\theta/2}|^2 dxdt\\  
        &= -\frac{1}{4}\int_{0}^T\int_{\dom} \rho_{\delta}^{\theta}\Delta \rho_{\delta}^{\theta}dxdt - 2\int_0^T\int_\dom (\rho_\delta^{\theta/2} \nabla \rho_{\delta}^{\theta/2}) \cdot (\rho^{\theta/2} \nabla \rho^{\theta/2}) dxdt \\
        &\phantom{xx}+ \int_0^T\int_\dom \rho^{\theta} |\nabla\rho^{\theta/2}|^2 dxdt
    \end{align*}
    where we used in the last step the identity
    \begin{align}\label{eq: identity conv rho alpha/2+1}
      \int_{0}^T\int_{\dom}\rho_\delta^{\theta}|\nabla \rho_\delta^{\theta/2}|^2dxdt
      = \frac14\int_0^T\int_\dom|\nabla\rho_\delta^\theta|^2 dxdt
      = -\frac{1}{4}\int_{0}^T\int_{\dom} \rho_\delta^{\theta}\Delta \rho_\delta^{\theta}dxdt.
    \end{align}
    The limit $\delta \to 0$ yields that
    \begin{align*}
        \lim_{\delta\to 0}&\|\rho_{\delta}^{\theta/2} \nabla \rho_\delta^{\theta/2} - \rho^{\theta/2} \nabla \rho^{\theta/2}\|_{L^2(\Omega_T)}^2
        = -\frac{1}{4}\int_{0}^T\int_{\dom} \rho^{\theta}\Delta \rho^{\theta}dxdt \\
        &\phantom{xx}- 2\int_0^T\int_\dom (\rho^{\theta/2} \nabla \rho^{\theta/2}) \cdot (\rho^{\theta/2} \nabla \rho^{\theta/2}) dxdt + \int_0^T\int_\dom \rho^{\theta} |\nabla\rho^{\theta/2}|^2 dxdt\\
        &= -\frac{1}{4}\int_{0}^T\int_{\dom} \rho^{\theta}\Delta \rho^{\theta}dxdt -\int_0^T\int_\dom \rho^{\theta} |\nabla\rho^{\theta/2}|^2 dxdt =0
    \end{align*}
    where the last equality follows again from \eqref{eq: identity conv rho alpha/2+1}.
\end{proof}

The previous lemma and the arguments of \cite[Sec.~3.3]{VaYu16} show that the limit $(\delta,\eta)\to 0$ in the Korteweg term can be carried out exactly as in \eqref{3.K}. The final step of the proof is to extend the result to initial data with lower regularity. This is achieved by a standard approximation argument: We construct a sequence of smooth functions that approximate the initial data. This finishes the construction of a weak solution to the regularized Navier--Stokes--Korteweg system \eqref{eq: 1a.mass}--\eqref{eq: 1a.momentum} in the sense of Definition \ref{def.weakregul}.


\section{From weak to renormalized weak solutions}\label{sec.wkTOr}

In this section, we show that any weak solution to the regularized Navier--Stokes-Korteweg system is a renormalized weak solution in the sense of Definition \ref{def.renormregul}. To this end, let $(\sqrt{\rho}, \sqrt{\rho}v)$ be a weak solution to \eqref{eq: 1a.mass}--\eqref{eq: 1a.momentum} with $\sigma>0$ on $[0,T]$ with initial conditions $(\sqrt{\rho^0}$, $\sqrt{\rho^0}v^0)$ satisfying \eqref{eq: 1.regIC} and such that $\sqrt{\rho^0}$ is uniformly bounded away from zero. 

The derivation of the renormalized formulation is based on the formal choice of the test function $\varphi'(v)\zeta(\rho)\theta$ in the momentum equation, with $\varphi$, $\zeta$ and $\theta$ given in Definition \ref{def.renormregul}. Since this is not justified at the level of weak solutions, due to their limited regularity (see \cite[Sec.~3]{LaVa18} for details), we regularize the test function. The proof follows the strategy of \cite[Theorem~3.2]{AnSp22}. Since that work considers a different Korteweg term, the available regularity differs from ours, so the proof has to be verified step by step. Up to the introduction of the density truncation $\zeta(\rho)$, however, we can follow \cite[Sec.~3]{LaVa18}, whose regularity coincides with ours, except for the additional regularity provided by our Korteweg term.

To this end, let $\eta$ be a normalized, smooth, nonnegative, even function with compact support in the unit space-time ball, and define
\begin{align*}
  \eta_\eps(t,x) = \eps^{-d-1}\eta(t/\eps,x/\eps), \quad
  \eps>0,\ (t,x)\in(0,T)\times\R^d.
\end{align*}
Since the approximation parameters $\eta$ and $\eps$ have already been sent to zero in the previous section, we reuse this notation here without causing any ambiguity. The mollification of a function $g$ is defined by
\begin{align}\label{5.molli}
  \overline{g}^\eps(t,x) = (\eta_\eps * g)(t,x), \quad
  (t,x)\in(0,T)\times\R^d.
\end{align}
The mollification satisfies some symmetry and commutator properties detailed in \cite[Sec.~3.1]{LaVa18}. We introduce the continuous cutoff function
\begin{align}\label{5.cutoff}
  \phi_m(y) = \begin{cases}
  0 &\mbox{for }0\le y\le 1/(2m), \\
  2my-1 &\mbox{for }1/(2m)\le y\le 1/m, \\
  1 &\mbox{for }1/m\le y\le m, \\
  2-y/m &\mbox{for }m\le y\le 2m, \\
  0 &\mbox{for }y\ge 2m,
  \end{cases}
\end{align}
and define the density-cutoff velocity $\widetilde{v}_m:=\phi_m(\rho)v$. 
Before we start, we summarize the estimates used throughout this section.
\begin{lemma}\label{lem.reg}
Let $-1 \le \alpha \le 0$. There exists a constant $C$ independent of $\sigma\in(0,1)$ such that
    \begin{align*}
    \|\rho^{\alpha/2+1}\|_{L^{10}(\Omega_T)}
    + \|\rho\|_{L^{q_1}(\Omega_T)} &\le C, \\
    \|\na\rho^{\alpha/2+1}\|_{L^{10/3}(\Omega_T)}+\|\na\rho\|_{L^{q_2}(\Omega_T)}
     + \|\nabla \sqrt{\rho}\|_{L^{q_3}(\Omega_T)}
    &\le C,\\
    \|\rho v\|_{L^{q_4}(0,T; L^2(\dom_T))} + \|\nabla (\rho v)\|_{L^{q_5}(\dom_T)}+ \|\partial_t \rho\|_{L^{q_5}(\dom_T)}&\leq C,\\
    \| \rho^{\gamma/2}\|_{L^{10/3}(\dom_T)} &\leq C
    \end{align*}
    holds for $q_1=5(\alpha+2)\ge 5$, $q_2=5(\alpha+2)/(\alpha+3) \ge 5/2$, $q_3 = 10(\alpha+2)/(2\alpha+5))\ge 10/3$, $q_4 = 2(\alpha+2)\geq 2$, and $q_5= 10(\alpha+2)/(7\alpha+15)\geq 5/4$. 
\end{lemma}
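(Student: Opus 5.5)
The plan is to derive everything from the $\sigma$-independent bounds available for the weak solution of the regularized system: the energy inequality \eqref{eq: 3.ei} and the BD entropy inequality of Lemma~\ref{lem.BD}, both passed to the limit $(\delta,\eta,\eps,\mu)\to0$ by weak lower semicontinuity. Their right-hand sides are bounded by $C_1(1+\sigma)\le 2C_1$ for $\sigma\in(0,1)$. This yields, uniformly in $\sigma$:
\begin{align*}
&\sqrt\rho v\in L^\infty(0,T;L^2(\Omega)),\quad \rho\in L^\infty(0,T;L^\gamma(\Omega)),\quad \na\sqrt\rho,\ \na\rho^{\alpha/2+1}\in L^\infty(0,T;L^2(\Omega)),\\
&\na\rho^{\gamma/2}\in L^2(\Omega_T),\quad \Delta\rho^{\alpha/2+1}\in L^2(\Omega_T),\quad \T_\nu\in L^2(\Omega_T).
\end{align*}
The Gagliardo--Nirenberg argument of Lemma~\ref{lem.n} is then repeated verbatim with $\lambda:=\rho^{\alpha/2+1}$. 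We have $\lambda\in L^\infty(0,T;H^1)\cap L^2(0,T;H^2)$, with the $L^2$ part of the $H^2$ norm controlled by mass conservation, since $\lambda\le 1+\rho$. This gives $\lambda\in L^{10}(\Omega_T)$ and $\na\lambda\in L^{10/3}(\Omega_T)$, now independently of $\sigma$ and $\eps$.

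Next, the bounds on $\rho$ follow by rewriting powers. First, $\rho=\lambda^{2/(\alpha+2)}$, so $\|\rho\|_{L^{q_1}}^{q_1}=\|\lambda\|_{L^{10}}^{10}$ with $q_1=5(\alpha+2)$. For the gradient we use
\[
\na\rho=\frac{2}{\alpha+2}\rho^{-\alpha/2}\na\lambda=\frac{2}{\alpha+2}\lambda^{-\alpha/(\alpha+2)}\na\lambda .
\]
Since $-\alpha\ge0$, Hölder with $\lambda\in L^{10}$ and $\na\lambda\in L^{10/3}$ gives $1/q_2=-\alpha/(10(\alpha+2))+3/10=(\alpha+3)/(5(\alpha+2))$, as claimed. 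Similarly,
\[
\na\sqrt\rho=\frac{1}{\alpha+2}\lambda^{-(\alpha+1)/(\alpha+2)}\na\lambda ,
\]
and the exponent $-(\alpha+1)/(\alpha+2)\le0$ for $\alpha\ge-1$ is the delicate point. Here I would instead write $\na\sqrt\rho=\frac12\rho^{-1/2}\na\rho$ and interpolate between $\na\sqrt\rho\in L^\infty L^2$ and the gradient bounds. Alternatively, I would use $\na\sqrt\rho=\frac{2}{\alpha+2}\rho^{-(\alpha+1)/2}\cdot\frac{\alpha+2}{2}\rho^{\alpha/4}\na\rho^{\alpha/4+1/2}\cdot(\ldots)$ together with the $L^4$ bound on $\na\rho^{\alpha/4+1/2}$ from the BD inequality. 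I will check that the exponent bookkeeping produces $1/q_3=(2\alpha+5)/(10(\alpha+2))$. My guess is that writing $\sqrt\rho=\lambda^{1/(\alpha+2)}$ and estimating $|\na\sqrt\rho|^{q_3}$ via $\na\rho$ and $\rho^{-1/2}$ with weights is awkward, so the cleanest route is the Hölder identity $|\na\sqrt\rho|^2=|\na\rho|\,|\na\sqrt\rho|\rho^{-1/2}/2$ combined with the $L^\infty L^2$ bound. This exponent matching is the main obstacle I expect.

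For the momentum quantities, write $\rho v=\sqrt\rho\cdot\sqrt\rho v$. Since $\sqrt\rho\in L^{2q_1}(\Omega_T)$ and $\sqrt\rho v\in L^\infty L^2$, we get $\rho v\in L^{q_4}(0,T;L^{r})$ for a suitable $r$. For the gradient, the definition \eqref{eq: 1.Tnu} gives
\[
\nu\na(\rho v)=\sqrt{\nu\rho}\,\T_\nu+2\nu\sqrt\rho v\otimes\na\sqrt\rho .
\]
The first term lies in $L^{2q_1/(q_1+1)}$ and the second in $L^2L^2\times L^{q_3}$ products. Hölder then yields $q_5$, with $1/q_5=1/2+1/q_3$ or similar. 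Indeed $1/2+(2\alpha+5)/(10(\alpha+2))=(7\alpha+15)/(10(\alpha+2))$, which matches $q_5$ and confirms the $q_3$ target. The time derivative follows from the mass equation, $\pa_t\rho=-\diver(\rho v)$. Finally, $\rho^{\gamma/2}\in L^\infty L^2\cap L^2H^1$ (the $L^\infty L^2$ bound coming from $\rho\in L^\infty L^\gamma$) gives $L^{10/3}$ by the standard three-dimensional interpolation.
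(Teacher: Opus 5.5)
Your plan for $\lambda=\rho^{\alpha/2+1}$, for $\rho\in L^{q_1}$, for $\na\rho\in L^{q_2}$ via $\na\rho=\frac{2}{\alpha+2}\rho^{-\alpha/2}\na\lambda$, for the splitting of $\nu\na(\rho v)$, for the mass equation and for $\rho^{\gamma/2}$ coincides with the paper's proof. However, the $L^{q_3}$ bound for $\na\sqrt\rho$ is never proved, and the route you settle on cannot work. The identity $|\na\sqrt\rho|^2=|\na\rho|\,|\na\sqrt\rho|\rho^{-1/2}/2$ is a tautology that reintroduces $\rho^{-1/2}$, which is unbounded near vacuum. Interpolating with $\na\sqrt\rho\in L^\infty(0,T;L^2(\Omega))$ does not help while the expression carries a negative power of $\rho$. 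The only higher-order bounds on $\sqrt\rho$ itself, namely on $\sqrt\sigma\Delta\sqrt\rho$ and $\sqrt[4]{\sigma}\na\sqrt[4]{\rho}$, degenerate as $\sigma\to0$. Checking $1/2+1/q_3=1/q_5$ shows only that the exponents are consistent, not that the bound holds. The missing identity is
\[
  \na\sqrt\rho=\frac{2}{\alpha+2}\,\rho^{-\alpha/4}\,\na\rho^{\alpha/4+1/2},
\]
whose weight has the \emph{nonnegative} exponent $-\alpha/4$, not $\rho^{-(\alpha+1)/2}\rho^{\alpha/4}=\rho^{-\alpha/4-1/2}$ as in your sketch. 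For $\alpha<0$, H\"older's inequality with $\rho^{-\alpha/4}\in L^{-4q_1/\alpha}(\Omega_T)$ and the BD bound $\na\rho^{\alpha/4+1/2}\in L^4(\Omega_T)$ gives $1/q_3=-\alpha/(4q_1)+1/4=(2\alpha+5)/(10(\alpha+2))$, which is the paper's argument. Until this step is in place, your $L^{q_5}$ bounds for $\na(\rho v)$ and $\pa_t\rho$ also remain unproved. At $\alpha=0$ the indicator in \eqref{2.ineq} removes the $L^4$ term, so there you would also need $\int_\Omega|\na\rho|^4\rho^{-2}dx\le C\int_\Omega|\Delta\rho|^2dx$, which follows from one integration by parts.

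The bound $\rho v\in L^{q_4}(0,T;L^2(\Omega))$ also does not follow from what you wrote. The facts $\sqrt\rho\in L^{2q_1}(\Omega_T)$ and $\sqrt\rho v\in L^\infty(0,T;L^2(\Omega))$ give only $\rho v\in L^{2q_1}(0,T;L^{2q_1/(q_1+1)}(\Omega))$, with spatial exponent below $2$. Nothing in your argument produces $q_4=2(\alpha+2)$. The needed ingredient is the three-dimensional embedding $L^2(0,T;H^2(\Omega))\hookrightarrow L^2(0,T;L^\infty(\Omega))$ applied to $\lambda$. Together with $\|\sqrt\rho\|_{L^\infty(\Omega)}^{2(\alpha+2)}=\|\lambda\|_{L^\infty(\Omega)}^2$, it yields $\sqrt\rho\in L^{q_4}(0,T;L^\infty(\Omega))$ and hence the claimed bound.
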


\begin{proof}
The estimates for $\rho^{\alpha/2+1}$ and $\nabla \rho^{\alpha/2+1}$ are obtained exactly as in Lemma~\ref{lem.n}. Thus, $\rho$ is bounded in $L^{q_1}(\Omega_T)$ with $q_1=10(\alpha/2+1)=5(\alpha+2)\ge 5$. Furthermore,
    \begin{align*}
    \|\na\rho\|_{L^{q}(\Omega_T)}
    &= C(\alpha)\|\rho^{-\alpha/2}
    \na\rho^{\alpha/2+1}\|_{L^q(\Omega_T)} \\
    &\le C\|\rho^{-\alpha/2}\|_{L^{-2q_1/\alpha}(\Omega_T)}
    \|\na\rho^{\alpha/2+1}\|_{L^{10/3}(\Omega_T)} \le C,
    \end{align*}
    where $1/q = -\alpha/(2q_1) + 3/10$ and hence $q_2:=q=5(\alpha+2)/(\alpha+3)$. We deduce from H\"older's inequality that
    \begin{align*}
        \|\na\rho^{1/2}\|_{L^{q}(\Omega_T)}
        &= C(\alpha)\|\rho^{-\alpha/4}
        \na\rho^{\alpha/4+1/2}\|_{L^{q}(\Omega_T)} \\
        &\le C\|\rho^{-\alpha/4}\|_{L^{-4q_1/\alpha}(\Omega_T)}
        \|\na\rho^{\alpha/4+1/2}\|_{L^4(\Omega_T)} \le C,
    \end{align*}
    where $1/q = -\alpha/(4q_1)+1/4$, which gives $q_3:=q=10(\alpha+2)/(2\alpha+5)\ge 10/3$. The Sobolev embedding $\rho^{\alpha/2+1}\in L^2(0,T; H^2(\dom))\hookrightarrow L^2(0,T; L^{\infty}(\dom))$ implies that $\sqrt{\rho} \in L^{q_4}(0,T; L^\infty(\dom))$ with $q_4= 2(\alpha+2)$. Then it follows from $\sqrt{\rho}v\in L^{\infty}([0,T]; L^2(\dom))$ that $\rho v$ is uniformly bounded in $L^{q_4}(0,T; L^2(\dom_T))$. Next, by definition \eqref{eq: 1.Tnu} of the tensor $\T_\nu$, we find that
    \begin{align*}
        \|\nabla (\rho v)\|_{L^q(\dom_T)} \leq C (\|\sqrt{\rho}\|_{L^{2q_1}(\dom_T)} \|\T_{\nu}\|_{L^2(\Omega_T)} + \|\sqrt{\rho}v\|_{L^2(\dom_T)} \| \nabla \sqrt{\rho}\|_{L^{q_3}(\Omega_T)})
    \end{align*}
    where $1/q= 1/2 +1/q_3$, which yields that $q_5 := q= 10(\alpha+2)/(7\alpha+15)\geq 5/4$. The estimate of $\partial_t \rho$ follows directly from the mass equation. The estimate for $\rho^{\gamma/2}$ is obtained by the same argument used to derive the estimate for $\nabla \rho^{\alpha/2+1}$.
\end{proof}

The estimates of the previous lemma imply the following bounds:
\begin{align*}
        \|\nabla \rho^{\alpha/2+1}\Delta \rho^{\alpha/2+1}\|_{L^{5/4}(\Omega_T)} + \|\rho^{\alpha/2+1} \Delta \rho^{\alpha/2+1}\|_{L^{5/3}(\dom_T)} 
        + \|\rho^{\gamma/2}\nabla\rho^{\gamma/2}\|_{L^{5/4}(\dom_T)}\leq C.
    \end{align*}
We will use the following regularity property, whose proof is given in \cite[Lemma 3.3]{LaVa18}.
\begin{lemma} There exists a constant $C(\sigma)$ such that
    \begin{align*}
        \|\rho v \|_{L^{5/2}(\dom_T)} + \|\sqrt{\rho}\S_{\nu} + \K_{Q}\|_{L^{5/3}(\dom_T)} + \|\rho |v|^2 v\|_{L^{5/4}(\dom_T)}\leq C(\sigma),\\
        \| \partial_t \phi_m(\rho)\|_{L^2(\dom_T)} + \|\nabla \phi_m(\rho)\|_{L^4(\dom_T)}\leq C(\sigma).
    \end{align*}
\end{lemma}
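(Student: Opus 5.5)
We derive each of the five bounds from the uniform regularity \eqref{eq: 1.regularity}, \eqref{1.regul} and Lemma~\ref{lem.reg}, using H\"older's inequality and allowing the constants to depend on $\sigma$ through the factors $\sigma^{-1/4},\sigma^{-1/2}$ in \eqref{1.regul}.

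\textbf{Momentum, drag and viscous stress.} We write $\rho v=\rho^{3/4}\cdot\rho^{1/4}v$. The factor $\rho^{1/4}v$ lies in $L^4(\Omega_T)$ with norm $\le C\sigma^{-1/4}$. The factor $\rho^{3/4}$ lies in $L^{4q_1/3}$ with $q_1\ge5$, and in particular in $L^{20/3}(\Omega_T)$. Since $1/4+3/20=2/5$, this gives $\rho v\in L^{5/2}(\Omega_T)$. For the drag term, $\rho|v|^2v=\rho^{1/4}\,(\rho^{1/4}v)^3$, so
\[\|\rho|v|^2v\|_{L^{5/4}}\le\|\rho^{1/4}\|_{L^{20}}\|\rho^{1/4}v\|_{L^4}^3,\]
and $\rho^{1/4}\in L^{20}$ because $\rho\in L^5$. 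For the viscous stress we use $\sqrt\rho\,\S_\nu$ with $\S_\nu\in L^2$ and $\sqrt\rho\in L^{2q_1}\subset L^{10}$, which puts $\sqrt\rho\,\S_\nu$ in $L^{5/3}$.

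\textbf{Quantum Korteweg tensor.} By \eqref{eq: 1.SQT},
\[\K_Q=\sqrt\rho\,\nabla^2\sqrt\rho-4\sqrt\rho\,\nabla\rho^{1/4}\otimes\nabla\rho^{1/4}.\]
The first term needs $\nabla^2\sqrt\rho\in L^2$, which we obtain from $\Delta\sqrt\rho\in L^2$ by elliptic regularity on the torus. Then $\sqrt\rho\in L^{10}$ gives $\sqrt\rho\,\nabla^2\sqrt\rho\in L^{5/3}$. For the second term, $\nabla\rho^{1/4}\in L^4$ and $\sqrt\rho\in L^{10}$ give exponent $1/10+1/2=3/5$, i.e.\ again $L^{5/3}$. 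This completes the first line of the lemma.

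\textbf{Cutoff $\phi_m(\rho)$.} The function $\phi_m$ is Lipschitz with $|\phi_m'|\le 2m$ and vanishes for $\rho<1/(2m)$, so the constants here may depend on $m$. We write $\nabla\phi_m(\rho)=\phi_m'(\rho)\,4\rho^{3/4}\nabla\rho^{1/4}$. On the support of $\phi_m'$ we have $\rho\le 2m$, so $\rho^{3/4}$ is bounded there, and since $\nabla\rho^{1/4}\in L^4$ we get $\nabla\phi_m(\rho)\in L^4$.

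For the time derivative we use the renormalized mass equation, justified by the DiPerna--Lions argument or directly by the chain rule for Sobolev functions:
\[\partial_t\phi_m(\rho)=-\phi_m'(\rho)\bigl(\nabla\rho\cdot v+\rho\diver v\bigr).\]
On the support of $\phi_m'$, $\rho$ lies in $[1/(2m),2m]$. There $|\nabla\rho\cdot v|=|4\rho^{1/2}\nabla\rho^{1/4}\cdot\rho^{1/4}v|$ is controlled by $(2m)^{1/2}\,|\nabla\rho^{1/4}|\,|\rho^{1/4}v|\in L^2$, since both factors are in $L^4$. Likewise $\rho|\diver v|\le C\sqrt\rho\,|\T_\nu|$, because $\T_\nu=\sqrt\rho\nabla v$ where $\rho>0$; this is bounded by $C(m)\,|\T_\nu|\in L^2$.

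\textbf{Main obstacle.} The delicate step is to justify the identities $\T_\nu=\sqrt{\nu\rho}\,\nabla v$ and the chain-rule formula for $\partial_t\phi_m(\rho)$ at the low regularity available, since $v$ itself is not a Sobolev function. The first thing we would try is to work on the set $\{\rho>1/(2m)\}$, where $v=(\rho v)/\rho$ has $\nabla v\in L^1_{\mathrm{loc}}$ by Lemma~\ref{lem.reg}, and to mollify the mass equation with \eqref{5.molli}. The commutator terms would be controlled exactly as in \cite[Lemma~3.3]{LaVa18}.
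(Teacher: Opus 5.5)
Your proof is correct and is essentially the H\"older-type argument of \cite[Lemma~3.3]{LaVa18} that the paper cites instead of giving a proof. You are also right that the constant for the $\phi_m$ bounds depends on $m$, which the statement suppresses. The obstacle you flag is not real, and no mollification or restriction to $\{\rho>1/(2m)\}$ is needed. By Lemma~\ref{lem.reg}, $\rho\in W^{1,1}(\Omega_T)$, so the chain rule gives $\partial_t\phi_m(\rho)=\phi_m'(\rho)\,\partial_t\rho$ a.e. Taking the trace of \eqref{eq: 1.Tnu}, which holds a.e.\ by definition, then yields $\partial_t\rho=-\diver(\rho v)=-\sqrt{\rho/\nu}\,\tr(\T_\nu)-2\sqrt{\rho}\,v\cdot\na\sqrt{\rho}$ without ever invoking $\na v$.
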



\subsection{Incorporating the truncation of the velocity}

We first derive a momentum equation involving $\widetilde{v}_m = \phi_m(\rho)v$, recalling definition \eqref{5.cutoff} of the cutoff function $\phi_m$. This is achieved by testing the momentum equation \eqref{eq: 1.wmomentum} with $\overline{\phi_m(\rho)\psi}^{\eps}$ (see definition \eqref{5.molli}) and passing to the limit $\eps\to0$, following exactly the argument in \cite{LaVa18} and in particular using \cite[Lemmas~3.1 and~3.2]{LaVa18}. The only terms requiring additional attention are those arising from the Korteweg contributions. For the first term, we have
\begin{align*}
    &\int_0^T \int_{\dom}  \rho^{\alpha/2+1}\frac{\Delta \rho^{\alpha/2 +1}}{\alpha/2+1}\diver(\overline{\psi\phi_m(\rho)}^{\eps})dxdt\\
    &= \int_0^T \int_{\dom}  \overline{\rho^{\alpha/2+1}\frac{\Delta \rho^{\alpha/2+1}}{\alpha/2+1}}^{\eps} \big(\nabla \phi_m(\rho)\cdot \psi + \phi_m(\rho)\diver\psi\big)dxd\tau\\
    &\rightarrow \int_0^T \int_{\dom} \rho^{\alpha/2+1}\frac{\Delta \rho^{\alpha/2+1}}{\alpha/2+1} \big(\nabla \phi_m(\rho) \cdot\psi + \phi_m(\rho)\diver\psi\big)dxd\tau.
\end{align*}
Similarly, for the second contribution of the Korteweg term, we obtain
\begin{align*}
    \int_0^T \int_{\dom}  \frac{\Delta \rho^{\alpha/2+1}}{(\alpha/2+1)^2}\nabla \rho^{\alpha/2+1} \cdot \overline{\phi_m(\rho)\psi}^{\eps} dxdt \rightarrow \int_0^T \int_{\dom} \frac{\Delta \rho^{\alpha/2+1}}{(\alpha/2+1)^2}\nabla \rho^{\alpha/2+1}  \cdot \psi\phi_m(\rho) dxdt.
\end{align*}
Thus, the following equation holds:
\begin{align*}
    \int_{\dom}&\rho^0 v^0\cdot \psi(0,\cdot) \phi_m(\rho^0)dx 
    + \int_0^T \int_{\dom}\big(\rho \widetilde{v}_m \cdot \partial_t \psi + (\rho v \otimes \widetilde{v}_m): \nabla \psi\big) dx dt \\ 
    &-\int_0^T \int_{\dom}\big(M_1 \phi_m(\rho): \nabla \psi + M_2 \cdot \psi\big) dxdt - \int_0^T\int_{\dom} M_1 : (\nabla \phi_m(\rho)\otimes \psi)dxdt\nonumber\\
    &-\int_0^T \int_{\dom}\kappa\bigg(\frac{\Delta \rho^{\alpha/2+1}}{(\alpha/2+1)^2}\nabla\rho^{\alpha/2+1} \cdot \psi \phi_m(\rho) + \rho^{\alpha/2+1} \frac{\Delta \rho^{\alpha/2+1}}{\alpha/2+1}\phi_m(\rho)\diver\psi \nonumber \\
    &\phantom{xx}+  \rho^{\alpha/2+1} \frac{\Delta \rho^{\alpha/2+1}}{\alpha/2+1}\nabla \phi_m(\rho)\cdot \psi\bigg) dxdt
    -\int_0^T \int_{\dom} \sqrt{\frac{\rho}{\nu}}  \tr(\T_{\nu}) \phi_m'(\rho) \rho v \psi dxdt = 0, \nonumber
\end{align*}
where we have set
\begin{align}\label{4.M}
    M_1 = \sqrt{\nu \rho} \S_{\nu} -\sigma\K_{Q},\quad
    M_2= \sigma v+ \sigma \rho v |v|^2 + 2 \rho^{\gamma/2}\nabla \rho^{\gamma/2}.
\end{align}


\subsection{Incorporating the renormalization of the velocity}

We test the momentum equation with the test function $\overline{\theta\varphi'\left(\overline{\widetilde{v}_m}^{\eps}\right)}^\eps$, where $\theta \in C_0^{\infty}([0,T)\times\Omega)$. Again, all terms except for the new Korteweg contribution can be treated exactly as in \cite[Sec.~3.3]{LaVa18}. It follows from our uniform bounds that the new Korteweg contribution converges:
\begin{align*}
    \int_0^T \int_{\dom}&\frac{\Delta \rho^{\alpha/2+1}}{(\alpha/2+1)^2}  \nabla\rho^{\alpha/2+1}\cdot \overline{\theta\varphi'\left(\overline{\widetilde{v}_m}^{\eps}\right)}^\eps dxdt \\
    &\rightarrow \int_0^T \int_{\dom} \frac{\Delta \rho^{\alpha/2+1}}{(\alpha/2+1)^2}\nabla\rho^{\alpha/2+1} \cdot \theta\varphi'\left(\widetilde{v}_m\right)dxdt, \\
    \int_0^T \int_{\dom}&\rho^{\alpha/2+1} \frac{\Delta \rho^{\alpha/2+1}}{\alpha/2+1}\nabla \phi_m(\rho)\cdot \overline{\theta\varphi'\left(\overline{\widetilde{v}_m}^{\eps}\right)}^\eps dxdt \\
    &\rightarrow\int_0^T \int_{\dom} \rho^{\alpha/2+1} \frac{\Delta \rho^{\alpha/2+1}}{\alpha/2+1}\nabla \phi_m(\rho)\cdot (\theta\varphi'(\widetilde{v}_m)) dxdt, \\
    \int_0^T \int_{\dom} & \rho^{\alpha/2+1} \frac{\Delta \rho^{\alpha/2+1}}{\alpha/2+1}\phi_m(\rho)\diver\bigg(\overline{\theta\varphi'\left(\overline{\widetilde{v}_m}^{\eps}\right)}^\eps\bigg)dxdt\\
    &=\int_0^T \int_{\dom} \overline{ \rho^{\alpha/2+1} \frac{\Delta \rho^{\alpha/2+1}}{\alpha/2+1}\phi_m(\rho)}^\eps\Big(\nabla \theta \cdot \varphi'\big(\overline{\widetilde{v}_m}^{\eps}\big) + \theta\varphi''\big(\overline{\widetilde{v}_m}^\eps\big):\nabla \overline{\widetilde{v}_m}^\eps \Big)dxdt\\
    &\rightarrow\int_0^T \int_{\dom} \rho^{\alpha/2+1} \frac{\Delta \rho^{\alpha/2+1}}{\alpha/2+1}\phi_m(\rho)\big(\nabla \theta \cdot \varphi'(\widetilde{v}_m) + \theta\varphi''(\widetilde{v}_m^\eps):\nabla \widetilde{v}_m \big)dxdt.
\end{align*}
Hence, recalling $\widetilde{v}_m=\phi_m(\rho)v$ and definition \eqref{4.M} of $M_1$ and $M_2$, the following equation is satisfied in the limit $\eps\to 0$:
\begin{align}
    \int_{\dom}&\rho^0\phi_m(\rho^0) v^0\cdot \varphi'\big(\phi_m(\rho^0)v^0\big)\theta(0,\cdot) dx 
    + \int_0^T \int_{\dom}\rho \varphi(\widetilde{v}_m)
    (\partial_t \theta + v \cdot \nabla \theta)  dx dt 
    \label{eq: 4.2.rmomentum}\\
    &-\int_0^T \int_{\dom}\big\{\phi_m(\rho)M_1 : (\varphi'(\widetilde{v}_m)\otimes \nabla \theta) + \phi_m(\rho)M_1 : \theta \nabla \varphi'(\widetilde{v}_m) 
    \nonumber \\
    &+ M_1 : (\nabla \phi_m(\rho) \otimes \varphi'(\widetilde{v}_m))\theta\big\}
    dxdt - \int_0^T\int_{\dom} \phi_m(\rho)M_2 \cdot \varphi'(\widetilde{v}_m) \theta dx dt \nonumber \\
    &- \int_0^T \int_{\dom} \kappa \rho^{\alpha/2+1} \frac{\Delta \rho^{\alpha/2+1}}{\alpha/2+1}\big\{\phi_m'(\rho)\nabla \rho \cdot \varphi'(\widetilde{v}_m) + \phi_m(\rho) \varphi''(\widetilde{v}_m) : \nabla\widetilde{v}_m\big\}\theta dxdt\nonumber\\
    &- \int_0^T \int_{\dom} \kappa \rho^{\alpha/2+1} \frac{\Delta \rho^{\alpha/2+1}}{\alpha/2+1} \phi_m(\rho)  \varphi'(\widetilde{v}_m)\cdot \nabla \theta dxdt\nonumber\\
    &-\int_0^T \int_{\dom} \kappa \frac{\Delta \rho^{\alpha/2+1}}{(\alpha/2+1)^2} \phi_m(\rho) \nabla \rho^{\alpha/2+1} \cdot\varphi'(\widetilde{v}_m)\theta dxdt \nonumber \\
    &- \int_0^T\int_{\dom} \sqrt{\frac{\rho}{\nu}} \tr(\T_{\nu}) \phi_m'(\rho) \rho v\cdot \varphi'(\widetilde{v}_m) \theta dxdt =0.\nonumber
\end{align}


\subsection{Incorporating the truncation of the density}

We replace the test function $\theta$ in equation \eqref{eq: 4.2.rmomentum} by $\zeta(\overline{\rho}^{\eps})\theta$ and pass to the limit $\eps \to 0$. The term involving the time derivative of the test function becomes
\begin{align*}
    \int_0^T \int_{\dom}& \rho \varphi(\widetilde{v}_m)\zeta(\overline{\rho}^{\eps}) \partial_t\theta dxdt + \int_0^T \int_{\dom} \rho \varphi(\widetilde{v}_m)\zeta'(\overline{\rho}^{\eps}) \theta \partial_t \overline{\rho}^{\eps}dxdt\\
    &\rightarrow\int_0^T \int_{\dom} \rho \varphi(\widetilde{v}_m)\zeta(\rho) \partial_t\theta dxdt + \int_0^T \int_{\dom} \rho \varphi(\widetilde{v}_m)\zeta'(\rho) \theta \partial_t \rho dxdt.
\end{align*}
The terms that do not involve derivatives of the test function are treated analogously, as the mollified test function is uniformly bounded in $L^{\infty}(\dom_T)$. For terms involving the spatial derivative of the test function, we apply the product rule
\begin{align*}
    \nabla(\zeta(\overline{\rho}^{\eps})\theta) = \zeta (\overline{\rho}^{\eps})\nabla \theta + \theta\zeta'(\overline{\rho}^{\eps})\nabla \overline{\rho}^{\eps}.
\end{align*}
The first contribution belongs to $L^{\infty}(\Omega_T)$. Consequently, passing to the limit in this term, when employed as a test function, is immediate. For the second term, the bound $\|\zeta'(\rho)\nabla \overline{\rho}^{\eps}\|_{L^4(\Omega_T)}\leq C$ is sufficient to justify the passage to the limit. Altogether, we conclude from \eqref{eq: 4.2.rmomentum} that
\begin{align}
  \int_{\dom}&\rho^0\phi_m(\rho^0) v^0\cdot \varphi'\big(\phi_m(\rho^0)v^0\big)\zeta(\rho^0)\theta(0,\cdot)dx  
  \label{eq: 4.3.rmomentum} \\
  &+ \int_0^T \int_{\dom}\rho \varphi(\widetilde{v}_m) \zeta(\rho)
  (\partial_t \theta + v \cdot\zeta(\rho) \nabla \theta) dx dt \nonumber \\
  & + \int_0^T \int_{\dom}\rho \varphi(\widetilde{v}_m)\theta\zeta'(\rho) 
  (\partial_t\rho + v \cdot\nabla \rho\theta)
  dx dt\nonumber\\
  &-\int_0^T \int_{\dom}\phi_m(\rho)\big\{M_1 : (\varphi'(\widetilde{v}_m)\otimes \nabla \theta)\zeta(\rho) 
  + M_1 : (\varphi'(\widetilde{v}_m)\otimes \nabla \rho) \theta \zeta'(\rho)\big\}dxdt \nonumber\\
  &- \int_0^T \int_{\dom}\big\{\phi_m(\rho)M_1 : \nabla \varphi'(\widetilde{v}_m)+ M_1 : (\nabla \phi_m(\rho) \otimes \varphi'(\widetilde{v}_m))\big\}\zeta(\rho)\theta dxdt\nonumber\\
  & -\int_0^T\int_{\dom}\phi_m(\rho) M_2\cdot \varphi'(\widetilde{v}_m) \zeta(\rho)\theta dx dt \nonumber \\
  &- \int_0^T \int_{\dom} \kappa \rho^{\alpha/2+1} \frac{\Delta \rho^{\alpha/2+1}}{\alpha/2+1}\big\{\phi_m'(\rho)\nabla \rho \cdot \varphi'(\widetilde{v}_m) + \phi_m(\rho)\varphi''(\widetilde{v}_m) : \nabla\widetilde{v}_m\big\}\zeta(\rho) \theta dxdt\nonumber\\
  &- \int_0^T \int_{\dom} \kappa \rho^{\alpha/2+1} \frac{\Delta \rho^{\alpha/2+1}}{\alpha/2+1} \phi_m(\rho) \big\{\varphi'(\widetilde{v}_m)\cdot\nabla \theta\zeta(\rho) + \varphi'(\widetilde{v}_m)\cdot \nabla \rho\theta\zeta'(\rho)\big\} dxdt \nonumber \\
  &-\int_0^T \int_{\dom} \kappa \frac{\Delta \rho^{\alpha/2+1}}{(\alpha/2+1)^2} \phi_m(\rho)  \nabla \rho^{\alpha/2+1}\cdot\varphi'(\widetilde{v}_m)\zeta(\rho)\theta dxdt \nonumber \\ 
  &- \int_0^T\int_{\dom} \sqrt{\frac{\rho}{\nu}} \tr(\T_{\nu}) \phi_m'(\rho) \rho v\cdot \varphi'(\widetilde{v}_m) \zeta(\rho)\theta dxdt =0. \nonumber
\end{align}


\subsection{Passing to the limit in the truncation of the velocity}

We now pass to the limit $m\to\infty$ in equation \eqref{eq: 4.3.rmomentum} by applying the dominated convergence theorem. It therefore remains to show the pointwise a.e.\ convergence of the integrands, since they are dominated by an integrable function. Since $\phi_m(y)\to 1$ and $\phi_m'(y)\to0$ as $m\to\infty$ for a.e.\ $y>0$, and since $\rho>0$ a.e.\ in $\Omega_T$ (which follows from
$\sigma \log\rho\in L^\infty(\Omega_T)$), we obtain
\begin{align*}
    \phi_m(\rho)\to1,\qquad
    \rho\phi_m'(\rho)\to0
    \quad\text{a.e. in }\Omega_T.
\end{align*}
Consequently, $\widetilde v_m=\phi_m(\rho)v\to v $ a.e.\ in $\Omega_T$ and $g(\widetilde v_m)\to g(v)$ a.e.\ in $\Omega_T$ for every $g\in W^{1,\infty}(\mathbb{R}^3)$. The pointwise a.e.\ convergence is straightforward for all terms except those involving $\nabla \widetilde{v}_m$. By rewriting 
\begin{align*}
  \sqrt\rho\na\widetilde{v}_m
  &= \sqrt\rho\na(\phi_m(\rho)v)
  = \sqrt\rho\phi'_m(\rho)\na\rho\otimes v
  + \sqrt\rho\phi_m(\rho) \na v \\
  &= 4\rho^{3/4}\phi'_m(\rho)\na\rho^{1/4}\otimes(\sqrt\rho v)
  + \phi_m(\rho)\rho^{-1/2}(\na(\rho v)-\na\rho\otimes v) \\
  &= 4\rho^{3/4}\phi'_m(\rho)\na\rho^{1/4}\otimes(\sqrt\rho v)
  + \phi_m(\rho)\nu^{-1/2}\mathbb{T}_\nu,
\end{align*}
we see that 
\begin{align*}
  \sqrt\rho\na\widetilde{v}_m\to \nu^{-1/2}\mathbb{T}_\nu
  \quad\mbox{a.e. in }\Omega_T.
\end{align*}
Hence, in the limit, we obtain the momentum equation \eqref{eq: 1.rmomentum} for the regularized system with the measures $R_1:=R_1^1+\cdots+R_1^6$ and $R_2:=R_2^1+\cdots+R_2^4$, where
\begin{align*}
  R_1^1 &= \rho \varphi(v)\zeta'(\rho) \partial_t \rho, &
  R_1^2 &= \rho\varphi(v) \zeta'(\rho)v\cdot \nabla \rho, \\
  R_1^3 &= - \sqrt{\nu\rho}\S_{\nu}:(\varphi'(v)\otimes\nabla\rho)
    \zeta'(\rho), & 
  R_1^4 &= - \sqrt{\nu} \zeta(\rho)\S_{\nu} : \bigg(\varphi''(v)\frac{\T_{\nu}}{\sqrt\nu}\bigg), \\
  R_1^5 &= - \kappa \rho^{\alpha/2+1} \frac{\Delta \rho^{\alpha/2+1}}{\alpha/2+1}\zeta'(\rho)\varphi'(v) \cdot \nabla \rho, &
  R_1^6 &= - \kappa \rho^{\alpha/2+1/2}\frac{\Delta \rho^{\alpha/2+1}}{\alpha/2+1}\varphi''(v): \frac{\T_{\nu}}{\sqrt\nu}\zeta(\rho), \\
  R_2^1 &= - \sqrt{\rho} \nabla^{2}\sqrt{\rho}: (\varphi'(v)\otimes \nabla \rho)\zeta'(\rho), &
  R_2^2 &= 4\sqrt{\rho}(\nabla \sqrt[4]{\rho} \otimes\nabla\sqrt[4]{\rho}): (\varphi'(v)\otimes \nabla \rho)\zeta'(\rho), \\
  R_2^3 &= - \nabla^2 \sqrt{\rho} : \bigg(\varphi''(v)\frac{\T_{\nu}}{\sqrt\nu}\bigg)\zeta(\rho), &
  R_2^4 &= 4 (\nabla \sqrt[4]{\rho}\otimes \nabla \sqrt[4]{\rho}) : \bigg(\varphi''(v)\frac{\T_{\nu}}{\sqrt\nu}\bigg)
  \zeta(\rho).
\end{align*}


\subsection{Establishing the viscosity relation}

The viscosity relation \eqref{eq: 1.rTnu} is rewritten following the same strategy as in \cite{LaVa18}. We therefore omit the proof and refer the reader to \cite[Sec.~4.1]{JuPh26} for the details. Here, the tensor $Q_{ijk}$ is defined as
\begin{align*}
   \int_0^T\int_\dom Q_{ijk}\phi dxdt = -\nu\sum_{\ell=1}^3\int_0^T\int_\dom
  v_k\frac{\pa^2\varphi}{\pa v_i\pa v_\ell}(v)\sqrt{\frac{\rho}{\nu}}
  (\T_\nu)_{j\ell}\phi dxdt.
\end{align*}


\subsection{Bounding the measures}

It remains to establish bounds as in \eqref{eq: 1.boundMeasure} for $R_1$, $R_2$, and $Q_{ijk}$. Since these measures are absolutely continuous with respect to the Lebesgue measure, it is sufficient to establish the corresponding $L^1(\Omega_T)$ bounds for their densities. Using Lemma~\ref{lem.reg} as well as directly the regularity \eqref{eq: 1.regularity} and \eqref{1.regul} of weak solutions, we estimate the terms $R_1^{i}$ as follows (recall that $\S_\nu=\frac12(\T_\nu+\T_\nu^T)$):
\begin{align*}
    \|R_1^1\|_{L^1(\Omega_T)}&\leq C \|\partial_t\rho\|_{L^{5/4}(\dom_T)} \|\rho^{\alpha/2+1} \|_{L^{10}(\dom_T)}  \|\rho^{-\alpha/2}\zeta'(\rho)\|_{L^{\infty}(\Omega_T)}\|\varphi(v)\|_{L^{\infty}(\Omega_T)} \\
    &\leq C  \|\rho^{-\alpha/2}\zeta'(\rho)\|_{L^{\infty}(\Omega_T)}\|\varphi(v)\|_{L^{\infty}(\Omega_T)},\\
    \|R_1^2\|_{L^1(\Omega_T)}&\leq\|\rho v\|_{L^2(\Omega_T)}\|\nabla \rho\|_{L^2(\Omega_T)}\| \varphi(v) \zeta'(\rho)\|_{L^{\infty}(\Omega_T)} \\
    &\leq C\| \varphi(v) \|_{L^{\infty}(\Omega_T)}\|\zeta'(\rho)\|_{L^{\infty}(\Omega_T)}, \\
    \|R_1^3\|_{L^1(\Omega_T)} &\leq C\|\sqrt{\rho}\|_{L^{10}(\Omega_T)}\|\S_{\nu}\|_{L^{2}(\Omega_T)}\|\nabla \rho\|_{L^{5/2}(\Omega_T)}\|\varphi'(v)\zeta'(\rho)\|_{L^\infty(\Omega_T)}\\
    &\leq C\|\varphi'(v)\|_{L^\infty(\Omega_T)} \|\zeta'(\rho)\|_{L^\infty(\Omega_T)}, \\
    \|R_1^4\|_{L^1(\Omega_T)}&\leq C \|\S_{\nu}\|_{L^2(\Omega_T)}^2 \|\zeta(\rho)\varphi''(v)\|_{L^{\infty}(\Omega_T)}
    \leq C \|\zeta(\rho)\|_{L^{\infty}(\Omega_T)}\|\varphi''(v)\|_{L^{\infty}(\Omega_T)}, \\
    \|R_1^5\|_{L^1(\Omega_T)}&\leq C \|\rho^{\alpha/2+1}\|_{L^{10}(\Omega_T)} \|\Delta \rho^{\alpha/2+1}\|_{L^2(\Omega_T)}\|\varphi'(v)\zeta'(\rho)\|_{L^{\infty}(\Omega_T)}\\
    &\leq C \|\varphi'(v)\|_{L^{\infty}(\Omega_T)}\|\zeta'(\rho)\|_{L^{\infty}(\Omega_T)}, \\
    \|R_1^6\|_{L^1(\Omega_T)}&\leq C \|\Delta \rho^{\alpha/2+1} \|_{L^2(\Omega_T)}\|\T_{\nu}\|_{L^2(\Omega_T)} \|\varphi''(v)\|_{L^{\infty}(\Omega_T)}\|\rho^{\alpha/2+1/2} \zeta(\rho)\|_{L^{\infty}(\Omega_T)}\\
    &\leq C\|\varphi''(v)\|_{L^{\infty}(\Omega_T)}\|\rho^{\alpha/2+1/2} \zeta(\rho)\|_{L^{\infty}(\Omega_T)}.
\end{align*}
The same strategy yields the following estimates for $R_2^{i}$:
\begin{align*}
     \sigma\|R_2^1\|_{L^{1}(\Omega_T)} &\leq \sqrt{\sigma}\|\sqrt\sigma \nabla^2 \sqrt{\rho}\|_{L^2(\Omega_T)}\|\sqrt{\rho} \zeta'(\rho)\|_{L^{\infty}(\Omega_T)} \|\nabla \rho\|_{L^2(\Omega_T)} \|\varphi'(v)\|_{L^{\infty}(\Omega_T)}\\
     &\leq C\sqrt{\sigma}\|\sqrt{\rho} \zeta'(\rho)\|_{L^{\infty}(\Omega_T)} \|\varphi'(v)\|_{L^{\infty}(\Omega_T)},\\
     \sigma\|R_2^2\|_{L^1(\Omega_T)}
     &\leq C \sqrt{\sigma} \|\sqrt{\rho} \zeta'(\rho)\|_{L^{\infty}(\Omega_T)} \|\sqrt[4]{\sigma} \nabla \sqrt[4]{\rho}\|_{L^4(\Omega_T)}^2 \|\varphi'(v)\|_{L^{\infty}(\Omega_T)} \|\nabla \rho\|_{L^2(\Omega_T)}\\
     &\leq C\sqrt{\sigma}\|\sqrt{\rho} \zeta'(\rho)\|_{L^{\infty}(\Omega_T)} \|\varphi'(v)\|_{L^{\infty}(\Omega_T)},\\
     \sigma\|R_2^3\|_{L^1(\Omega_T)}&\leq \sqrt{\sigma}\nu^{-1/2} \|\sqrt{\sigma}\nabla^2 \sqrt{\rho}\|_{L^2(\Omega_T)} \|\T_{\nu}\|_{L^2(\Omega_T)}\|\varphi''(v)\zeta(\rho)\|_{L^{\infty}(\Omega_T)}\\
     &\leq C\sqrt{\sigma} \|\zeta(\rho)\|_{L^{\infty}(\Omega_T)} \|\varphi''(v)\|_{L^{\infty}(\Omega_T)},\\
     \sigma\|R_2^4\|_{L^1(\Omega_T)} &\leq C \sqrt{\sigma} \|\sqrt[4]{\sigma}\nabla \sqrt[4]{\rho}\|_{L^4(\Omega_T)}^2 \|\T_{\nu}\|_{L^2(\Omega_T)} \|\varphi''(v)\zeta(\rho)\|_{L^{\infty}(\Omega_T)}\\
     &\leq C\sqrt{\sigma} \|\zeta(\rho)\|_{L^{\infty}(\Omega_T)} \|\varphi''(v)\|_{L^{\infty}(\Omega_T)}.
\end{align*}
A similar estimate holds for $Q_{ijk}$. These estimates prove the bound \eqref{eq: 1.boundMeasure} in Definition \ref{def.renormregul}.


\section{From renormalized weak to weak solutions}\label{sec.rTOwk}

Let $(\sqrt{\rho}, \sqrt{\rho}v)$ be a renormalized weak solution to \eqref{eq: 1a.mass}--\eqref{eq: 1a.momentum} with $\sigma\ge0$ on $[0,T]$ and initial conditions $(\sqrt{\rho^0}, \sqrt{\rho^0} v^0)$ such that \eqref{eq: 1.regIC} holds and $\sqrt{\rho^0}$ is uniformly bounded away from zero if $\sigma>0$. To recover the weak formulation from the renormalized formulation, one would formally choose $\varphi(y)=y_i$ for $i=1,2,3$ and $\zeta(z)=1$. Since these functions do not satisfy the admissibility assumptions, we approximate them by suitable functions and then pass to the limit. To this end, we define a smooth function $\Phi$ satisfying
\begin{align*}
    &\Phi(z)= 1 \mbox{ for } z \in [-1,1],\quad \Phi(z)=0 \mbox{ for } z\notin [-2,2],\quad \Phi(z) \in [0,1].
\end{align*}
We further define for $\delta>0$ and $i=1,2,3$:
\begin{align*}
    \varphi_{\delta}^{i} (y) = \frac{1}{\delta} \Psi(\delta y_i) \prod_{j=1}^3 \Phi(\delta y_j),\quad\mbox{where }\Psi(z)= \int_0^z \Phi(s)ds.
\end{align*}
It holds by construction that $\varphi_{\delta}^{i} \in W^{3,\infty}(\mathbb{R}^3)$ and, since this function is compactly supported, it satisfies inequality \eqref{eq: 1.varphi}. Moreover, we have
\begin{align*}
    & \|\varphi_{\delta}^{i}\|_{L^{\infty}}\leq \frac{C}{\delta}, \quad \|(\varphi_{\delta}^{i})'\|_{L^{\infty}}\leq C,  \quad \|(\varphi^{i}_{\delta})''\|_{L^{\infty}}\leq  C\delta, \\
    & \varphi_{\delta}^{i} (y)\rightarrow y_i, \quad (\nabla_y\varphi_{\delta}^{i}) (y)\rightarrow e_i
    \quad\mbox{as }\delta\to 0,
\end{align*}
where $e_i$ is the $i$th unit vector of $\R^3$. Furthermore, we define the function $\zeta_\lambda(z) = \Phi(\lambda z)$ for $\lambda>0$, which converges pointwise to one as $\lambda\to 0$ and satisfies for $0<a<1$:
\begin{align*}
    \|\zeta_{\lambda}\|_{L^{\infty}} \leq 1, \quad |z|^{a}\zeta_{\lambda}(z)\leq \frac{C}{\lambda^{a}}, \quad |z|^{a} |\zeta'_{\lambda}(z)| \leq C \lambda^{1-a}.
\end{align*}
Choosing $\delta= \lambda^{\beta}$ with $\beta= 3/4+\alpha/2$, inequality \eqref{eq: 1.boundMeasure} implies that
\begin{align*}
    \|R_1\|_{\mathcal{M}(\Omega_T)}
    + \sigma\|R_2\|_{\mathcal{M}(\Omega_T)}
    +\sum_{i,j,k=1}^3\| Q_{ijk}\|_{\mathcal{M}(\dom_T)} \rightarrow 0
\end{align*}
as $\lambda \rightarrow 0$ and hence 
\begin{align*}
    \int_0^T\int_{\dom} (R_1 + \sigma R_2) \theta dxdt\rightarrow 0,\quad \int_0^T\int_{\dom} Q_{ijk}\theta dxdt\rightarrow 0.
\end{align*}
The remaining terms in the momentum equation and viscosity relation converge by dominated convergence.


\section{Limit $\sigma \rightarrow 0$}\label{sec.sigma}

Consider a sequence $\sigma\to0$ and a sequence of initial conditions $(\sqrt{\rho_{\sigma}^0},\sqrt{\rho_{\sigma}^0}v_{\sigma}^0)\rightarrow (\sqrt{\rho^0},\sqrt{\rho^0}v^0)$ converging in $(H^1(\dom)\cap L^{2\gamma}(\dom))\times L^2(\dom;\R^3)$ such that \eqref{eq: 1.regIC} is uniformly satisfied and
\begin{align*}
    \sigma\bigg|\int_{\T^3}\log\rho^0_{\sigma}(x) dx\bigg|\leq C
\end{align*}
holds. Let $(\sqrt{\rho_\sigma}, \sqrt{\rho_\sigma}v_\sigma)$ be a sequence of corresponding renormalized weak solutions to \eqref{eq: 1a.mass}--\eqref{eq: 1a.momentum} on $[0,T]$. In this section, we show that there exists a subsequence of renormalized weak solutions converging to a weak solution to \eqref{eq: 1a.mass}--\eqref{eq: 1a.momentum} with $\sigma=0$.

The following compactness arguments rely on the estimates in \eqref{eq: 1.regularity}, whose constants depend only on the initial data and $T$, and are therefore uniform with respect to $\sigma$. We also repeatedly use the estimates derived from these bounds, which are collected in Lemma~\ref{lem.reg}. The strong convergence of $\rho$ follows from the Aubin--Lions lemma:
\begin{align*}
    \rho_\sigma \rightarrow \rho \quad \mbox{strongly in } L^{p}(\Omega_T) \mbox{ as }\sigma\to 0\mbox{ for all } 
    p<q_1 = 5(\alpha+2),
\end{align*}
and therefore, for any $\zeta\in L^{\infty}(\R)$,
\begin{align}
    \zeta(\rho_{\sigma})\rightarrow\zeta(\rho) \quad \mbox{strongly in } L^q(\Omega_T)\mbox{ for all } q<\infty.\label{eq: zetaConv}
\end{align}
It is possible to show that $\partial_t (\rho_{\sigma}v_{\sigma}) \in L^q(0,T;H^{-s}(\dom))$ for some $q>1$, $s\in \N$, using the fact that $\diver\K(\rho_{\sigma})$ is uniformly bounded in $L^{10/9}(0,T; W^{1,3}(\dom))$; see \eqref{3.K}. Hence, the Aubin--Lions lemma yields
\begin{align}\label{7.m}
    \rho_{\sigma} v_\sigma \rightarrow  m \quad \text{strongly in } L^{p}(\Omega_T) \text{ for all } p<2.
\end{align}
We deduce from the proof of Lemma~\ref{lem.strConvGradient} that
\begin{align*}
    \nabla \rho_\sigma^{\alpha/2+1} \rightarrow\nabla\rho^{\alpha/2+1}  \quad \mbox{strongly in } L^p(\Omega_T) \ \text{for all } p<10/3.
\end{align*}
By the Banach--Alaoglu theorem, we obtain the following convergences (up to subsequences):
\begin{align*}
   \T_{\nu, \sigma} \rightharpoonup \T_{\nu}&\quad \mbox{weakly in } L^2(\Omega_T)\\
    \sqrt{\rho_{\sigma}} v_{\sigma} \rightharpoonup\Lambda &\quad \mbox{weakly* in } L^{\infty}(0,T; L^2(\dom)),\\
    \nabla \rho^{\gamma/2}_{\sigma}\rightharpoonup \nabla \rho^{\gamma/2} &\quad \mbox{weakly in } L^2(\Omega_T),\\
    \nabla \rho_\sigma^{\alpha/4+1/2} \rightharpoonup\nabla \rho ^{\alpha/4+1/2} &\quad \mbox{weakly in }L^4(\Omega_T),\\
    \Delta \rho_{\sigma}^{\alpha/2+1} \rightharpoonup\Delta \rho^{\alpha/2+1}&\quad \mbox{weakly in }L^2(\Omega_T).  
\end{align*}
The limit in $\rho_\sigma v_\sigma = \sqrt{\rho_\sigma} \sqrt{\rho_\sigma}v_\sigma$ yields $\sqrt{\rho} \Lambda =m$ a.e. We wish to identify the weak limit $m$, introduced in \eqref{7.m}. For this, we define the velocity $v$ by
\begin{align*}
    v(t,x) = \begin{cases}\displaystyle
        \frac{m(t,x)}{\rho(t,x)} &\mbox{for }(t,x)\in\{\rho>0\},\\
        0 &\mbox{for }(t,x)\in\{\rho=0\}.
    \end{cases}
\end{align*}
It follows from Fatou's Lemma that $m=\rho v = \sqrt{\rho} \Lambda$ a.e. The following convergences are proved in \cite[Lem.~4.2]{AnSp22} for continuous and bounded functions $H$:
\begin{align}
    \rho_{\sigma}^{\beta} H(v_{\sigma}) \rightarrow \rho^{\beta} H(v) &\quad \mbox{strongly in }L^p(\Omega_T),\  p<5(\alpha+2)/\beta,\label{eq: rhoH(v)}\\
    \rho_\sigma^{\gamma/2}H(v_{\sigma}) \rightarrow \rho^{\gamma/2} H(v) &\quad \mbox{strongly in } L^p(\Omega_T),\ p < 10/3,\label{eq: rhoGammaH(v)}\\
    \nabla \rho_{\sigma}^{\alpha/2+1} H(v_\sigma)\rightarrow \nabla\rho^{\alpha/2+1} H(v) &\quad \mbox{strongly in } L^p(\Omega_T),\ p<10/3.\label{eq: gradientH(v)}
\end{align}

The passage to the limit in the viscosity relation is identical to that in \cite{LaVa18}; for details, we refer to \cite{JuPh26}. The momentum equation is treated term by term. For the Korteweg term, we set $p=3$. The desired convergence then follows from \eqref{eq: rhoH(v)} with $\beta= \alpha/2+1$ and \eqref{eq: gradientH(v)}, combined with \eqref{eq: zetaConv} with $q=6$:
\begin{align*}
        \int_{0}^T\int_{\dom} &\rho^{\alpha/2+1}_{\sigma}\frac{\Delta \rho_{\sigma}^{\alpha/2+1}}{\alpha/2+1}\varphi'(v_{\sigma})\cdot \nabla \theta \zeta(\rho_{\sigma})dxdt \\
        &\rightarrow \int_{0}^T\int_{\dom} \rho^{\alpha/2+1}\frac{\Delta \rho^{\alpha/2+1}}{\alpha/2+1}\varphi'(v)\cdot\nabla \theta \zeta(\rho)dxdt, \\
        \int_{0}^T\int_{\dom} &\frac{\Delta \rho_{\sigma}^{\alpha/2+1}}{(\alpha/2+1)^2}\nabla\rho^{\alpha/2+1}_{\sigma}\cdot\varphi'(v_{\sigma})\zeta(\rho_{\sigma}) \theta dxdt \\
        &\rightarrow \int_{0}^T\int_{\dom} \frac{\Delta \rho^{\alpha/2+1}}{(\alpha/2+1)^2}\nabla\rho^{\alpha/2+1}\cdot\varphi'(v)\zeta(\rho) \theta dxdt.
\end{align*}
Likewise, the convergence in the convective terms follows directly from \eqref{eq: rhoH(v)} with $p=2$ and \eqref{eq: zetaConv} with $q=2$, since the mappings $y\mapsto \varphi(y)$ and $y\mapsto y\varphi(y)$ are bounded and continuous. The convergence in the viscosity term follows from \eqref{eq: rhoH(v)} with $\beta=1/2$ and $p=5$, together with \eqref{eq: zetaConv} with $q=10/3$, while the convergence in the pressure term is obtained from \eqref{eq: rhoGammaH(v)} with $p=3$ and \eqref{eq: zetaConv} with $q=3$. Finally, the remaining terms converge to zero, due to the following estimations:
\begin{align*}
  \|\sigma &\rho_{\sigma} |v_{\sigma}|^2 \zeta(\rho_{\sigma}) \varphi'(v_{\sigma})\|_{L^1(\Omega_T)} \\
  &\leq \sqrt[4]{\sigma}\|\sqrt[4]{\rho_{\sigma}}\|_{L^4(\Omega_T)}
  \|\sqrt[4]{\sigma}\sqrt[4]{\rho_{\sigma}}v_{\sigma}\|_{L^4(\Omega_T)}^3\|\zeta(\rho_{\sigma})\varphi'(v_{\sigma})\|_{L^{\infty}(\Omega_T)}
  \leq \sqrt[4]{\sigma} C, \\
  \|\sigma &v_{\sigma}\varphi'(v_{\sigma})\zeta(\rho_{\sigma})
  \|_{L^{1}(\Omega_T)} \\
  &\leq  C\sqrt{\sigma} \|\sqrt{\sigma}v_{\sigma}\|_{L^2(\dom)} \|\zeta(\rho_{\sigma})\varphi'(v_{\sigma})\|_{L^{\infty}(\Omega_T)}
  \leq \sqrt{\sigma} C, \\
  \|\sigma& \sqrt{\rho_\sigma} \nabla^2 \sqrt{\rho_\sigma} 
  \varphi'(v_\sigma)\zeta(\rho_\sigma)\|_{L^1(\Omega_T)} \\
  &\leq \sqrt{\sigma} \|\sqrt{\sigma}\nabla^2\sqrt{\rho_\sigma}\|_{L^2(\Omega_T)}\| \sqrt{\rho_\sigma} \zeta(\rho_{\sigma})\|_{L^\infty(\Omega_T)}
  \|\varphi'(v_\sigma)\|_{L^\infty(\Omega_T)}
  \leq \sqrt{\sigma}C, \\
  \|4\sigma& \sqrt{\rho}\nabla \sqrt[4]{\rho_{\sigma} \otimes 
  \nabla \sqrt[4]{\rho_{\sigma}} }\varphi'(v_\sigma)\zeta(\rho_\sigma)\|_{L^1(\Omega_T)} \\
  &\leq  C\sqrt{\sigma} \|\sqrt[4]{\sigma}\nabla\sqrt[4]{\rho_{\sigma}}
  \|^2_{L^4(\Omega_T)}\|\sqrt{\rho_{\sigma}}\zeta(\rho_{\sigma})
  \|_{L^{\infty}(\Omega_T)} \|\varphi'(v_{\sigma})\|_{L^{\infty}(\Omega_T)}
  \leq \sqrt{\sigma} C.
\end{align*}
This completes the proof of the limit $\sigma\to0$ and, consequently, the proof of Theorem~\ref{theorem.ex}.

    
\section{Remarks}\label{sec.rem}

\begin{remark}[Range for $\alpha$]\label{rem.alpha}\rm 
An inspection of the proof shows that the assumption $\alpha\geq -1$ is used essentially in Section~\ref{sec.wkTOr}. Indeed, in the limit $m\to\infty$, we can establish only the convergence $\sqrt{\rho}\nabla v_m\rightarrow \nu^{-1/2} \T_{\nu}$, whereas convergence of $\nabla v_m$ itself is not available. Consequently,
\begin{align*}
    \int_0^T\int_{\dom}& \kappa \rho^{\alpha/2+1} \frac{\Delta \rho^{\alpha/2+1}}{\alpha/2+1} \phi_m(\rho) \varphi''(\widetilde{v}_m) : \nabla\widetilde{v}_m \zeta(\rho)\theta dxdt  \\
    &\rightarrow \int_0^T\int_{\dom} \kappa \rho^{\alpha/2+1/2} \frac{\Delta \rho^{\alpha/2+1}}{\alpha/2+1} \varphi''(v) : \frac{\T_{\nu}}{\sqrt\nu} \zeta(\rho)\theta dxdt,
\end{align*}
and the limit term contributes to $R_1$. Estimating this contribution requires sufficient regularity of $\rho^{\alpha/2+1/2}$, which is available only for $\alpha\geq -1$. Although the assumption $\alpha\geq -1$ is invoked at several earlier stages of the proof, those arguments could likely be adapted to a broader range of $\alpha$. The above step appears to be the only place at which this restriction is genuinely essential.
\end{remark}

\begin{remark}[General viscosity functions]\label{rem.visc}\rm 
    One may ask whether the approach extends to viscosity coefficients of the form $\mu_1(\rho)=\rho^\beta$ with $\beta>0$. A first ingredient is a generalization of the auxiliary inequality from Section~\ref{sec.ineq},
    \begin{align*}
        -\int_{\mathbb{T}^d}\na\rho^\beta\cdot\na\bigg(
        \rho^{\alpha/2}\frac{\Delta\rho^{\alpha/2+1}}{\alpha/2+1}\bigg)dx
        \ge C_0^*\int_{\mathbb{T}^d}\big(|\na\rho^{\theta/2}|^4
         + |\Delta\rho^\theta|^2\big)dx, 
    \end{align*}
    where $\theta=(\alpha+\beta+1)/2$ and $C_0^*>0$, for appropriate ranges of $\alpha$ and $\beta$. Such an inequality can be proved using the same method as in Theorem \ref{theorem.ineq}. One could then consider an approximation scheme incorporating the Korteweg term associated with the generalized viscosity together with additional regularization terms, following the strategy of \cite{BVY22}. In that work, the authors establish an existence theory for the compressible Navier--Stokes system with generalized viscosity coefficients. Their approximation scheme introduces drag terms, an additional pressure, and an approximation of the viscosity coefficient, and the limit passage relies on the framework of renormalized weak solutions. To solve our equations, one needs to justify the contribution of the Korteweg term, which is independent of the viscosity term. It is likely that, as in \cite{AnSp22} and also in our work, the notion of renormalized weak solutions must be modified by introducing a truncation of the density. Since each step of the compactness argument depends sensitively on this structure, such an extension requires a careful reexamination of the entire approximation procedure and is therefore left for future work.
\end{remark}

\begin{remark}[Inequality \eqref{2.ineq} in one dimension]
\label{rem.1D}\rm
We can derive the optimal range $-2<\alpha<1$ for \eqref{2.ineq} in one space dimension by systematic integration by parts. All possible integrations by parts are encoded in the ``dummy'' integrals
\begin{align*}
  J_1 &= \int_{\T}\bigg(
  \lambda^2\bigg(\frac{\lambda_x}{\lambda}\bigg)^3\bigg)_x dx
  = \int_{\T}\lambda^2(-\xi_1^4+3\xi_1^2\xi_2)dx = 0, \\
  J_2 &= \int_{\T}\bigg(\lambda^2\frac{\lambda_x}{\lambda}
  \frac{\lambda_{xx}}{\lambda}\bigg)_x dx
  = \int_{\T}\lambda^2(\xi_2^2 + \xi_1\xi_3)dx = 0, \\
  J_3 &= \int_{\T}\bigg(\lambda^2\frac{\lambda_{xxx}}{\lambda}
  \bigg)_x dx = \int_{\T}\lambda^2(\xi_1\xi_3 + \xi_4)dx = 0,
\end{align*}
where $\xi_j=(\pa_x^j\lambda)/\lambda$ for $j=1,2,3,4$. With the notation of the proof of Theorem \ref{theorem.ineq}, we have in one space dimension
\begin{align*}
  L = \int_{\T}\lambda^2\bigg(\frac{1}{(\beta+1)^2}\xi_2^2 
  - \frac{\beta}{(\beta+1)^3}\xi_1^2\xi_2\bigg)dx, \quad
  R = \int_{\T}\lambda^2\bigg(\frac{1}{16}\xi_1^4 + \xi_2^2\bigg)dx,
\end{align*}
recalling that $\beta=\alpha/2$. Inequality \eqref{2.in} is shown if we can find $C_0>0$, $C_1$, $C_2$, $C_3\in\R$ such that
\begin{align*}
  L - {}& C_0 R + C_1J_1 + C_2J_2 + C_3J_3 \\
  &= \int_{\T}\frac{\lambda^2}{(\beta+1)^3}\bigg\{
  (\beta + 1 - K_0 + K_2)\xi_2^2 + (3K_1 - \beta)\xi_1^2\xi_2
  - \bigg(\frac{K_0}{16} + K_1\bigg)\xi_1^4 \\
  &+ (\beta+1)^3(K_2+K_3)\xi_1\xi_3 + (\beta+1)^3K_3\xi_4
  \bigg\}dx,
\end{align*}
where $K_i=C_i/(\beta+1)^3$ for $i=0,1,2,3$. The integrand contains $\xi_4$ in first power, so it cannot have a sign and $K_3$ must vanish. Also the variable $\xi_3$ appears in first power, and we need to choose $K_2=0$ to eliminate the term $\xi_1\xi_3$. This means that only one integration by parts is relevant, defined by $J_1$. We wish to find $K_0>0$ and $K_1\in\R$ such that for all $(\xi_1,\xi_2)\in\R^2$,
\begin{align*}
  (\beta + 1 - K_0)\xi_2^2 + (3K_1 - \beta)\xi_1^2\xi_2 
  - \bigg(\frac{K_0}{16} + K_1\bigg)\xi_1^4 \ge 0.
\end{align*}
This is the case if and only if $K_0<\beta+1$ and
\begin{align*}
  0 &\le -4(\beta+1-K_0)\bigg(\frac{K_0}{16} + K_1\bigg) 
  - (3K_1 - \beta)^2  \\
  &= -9\bigg(K_1 - \frac19(\beta-2+2K_0)\bigg)^2 - \beta^2 
  - \frac14(\beta+1-K_0)K_0 + \frac19(\beta-2+2K_0)^2.
\end{align*}
Choosing the maximizing value $K_1=(\beta-2+2K_0)/9$, we need to find $0<K_0<\beta+1$ such that 
\begin{align*}
  0 \le f(K_0) := -\beta^2 - \frac14(\beta+1-K_0)K_0 
  + \frac19(\beta-2+2K_0)^2.
\end{align*}
If $f(0)>0$, we can find, by continuity, a value $K_0>0$ such that $f(K_0)\ge 0$. Now, $f(0)>0$ is equivalent to $-\beta^2+(\beta-2)^2/9 = -4(\beta+1)(2\beta-1)/9 > 0$. This yields the conditions $\beta+1>0$, $2\beta-1<0$, which is equivalent to $-2<\alpha<1$. 
\end{remark}


\end{document}